\date{\today}
\newtheorem{thm}{Theorem}[section]
\newtheorem{lem}[thm]{Lemma}
\theoremstyle{definition}
\newtheorem{defn}[thm]{Definitions}
\theoremstyle{remark}
\numberwithin{equation}{section}
\newcommand{\R}{\mathbb R}
\newcommand{\N}{\mathbb N}
\newcommand{\ml}{\mathcal{L}}
\newcommand{\g}{\mathfrak{g}}
\newcommand{\la}{\lambda}
\newcommand{\C}{{\mathbb C}}
\newcommand{\La}{\langle}
\newcommand{\Ra}{\rangle}
\newcommand{\mz}{\mathfrak{z}}
\newcommand{\p}{\mathfrak{p}}
\newcommand{\q}{\mathfrak{q}}
\newcommand{\F}{\mathcal{F}}
\DeclareMathOperator{\pf}{Pf}
\newcommand{\blue}[1]{\textcolor{blue}{#1}}
\title[$L^p$-Uncertainty inequalities]
{$L^p$- Heisenberg--Pauli--Weyl uncertainty inequalities on certain two-step nilpotent Lie groups}
\author[Ganguly and Sarkar]{ Pritam Ganguly and Jayanta Sarkar}
\address[Pritam Ganguly]{{ Stat-Math Unit,
Indian Statistical Institute, Kolkata,
BT Road, Baranagar, Kolkata 700108}}
\email{pritam1995.pg@gmail.com}
\address[Jayanta Sarkar]{School of Mathematics \\
Indian Institute of Science Education and Research Thiruvananthapuram \\
Maruthamala P. O, Vithura\\ Kerala 695551, India.}
\email{ jayantasarkarmath@gmail.com, jayantasarkar@iisertvm.ac.in}
 \date{}
\keywords{two-step nilpotent Lie groups, M\'etivier groups, Heisenberg uncertainty principle, spectral decomposition, Schatten norm}
\subjclass[2020]{Primary: 43A80. Secondary: 22E25, 43A15, 43A30.}
\begin{document}

\maketitle

\begin{abstract} 
This article presents the $L^p$-Heisenberg--Pauli--Weyl uncertainty inequality for the group Fourier transform on a class of two-step nilpotent Lie groups, specifically the M\'etivier groups. This inequality quantitatively demonstrates that on M\'etivier groups, a nonzero function and its group Fourier transform cannot both be sharply localized. The proof primarily relies on utilizing the dilation structure inherent to two-step nilpotent Lie groups and estimating the Schatten class norms of the group Fourier transform. The inequality we establish is new, even in the simplest case of Heisenberg groups. Our result significantly sharpens all previously known $L^p$-Heisenberg--Pauli--Weyl uncertainty inequalities for $1 \le p < 2$ on M\'etivier groups.

\end{abstract}

\section{Introduction and main results}
Nearly a century ago, in 1927, Heisenberg \cite{Heis} first introduced the idea that the position and momentum of a particle cannot be precisely determined at the same time in any quantum-mechanical state. Later, Kennard \cite{Ken} and Weyl \cite{Weyl} provided a more rigorous mathematical foundation for this concept, attributing it to Pauli. Nowadays, the mathematical community refers to this result as the Heisenberg--Pauli--Weyl uncertainty inequality, which, in its most general form, states that for every $\alpha,\beta>0$ and $f\in L^2(\R^n)$,
\begin{equation}
    \label{HPW-R}
    \|f\|_{L^2(\R^n)}^{\alpha+\beta}\leq C(\alpha,\beta,n) \left(\int_{\R^n}\|x\|^{2\alpha}|f(x)|^2~dx\right)^{\frac{\beta}{2}}\left(\int_{\R^n}\|\xi\|^{2\beta}|\widehat{f}(\xi)|^2~d\xi\right)^{\frac{\alpha}{2}}
\end{equation}
where $\widehat{f}$ stands for the Fourier transform of $f$, and $\|\cdot\|$ is the standard Euclidean norm.  Beyond its fundamental role in quantum physics and signal processing, this inequality offers profound mathematical insights, particularly through a striking property of Fourier pairs: \textit{a function and its Fourier transform cannot both be sharply localized}. This concept, known as the uncertainty principle in harmonic analysis, has captivated mathematicians for decades, inspiring various formulations across different mathematical contexts. Notably, Hardy introduced a qualitative version of this principle (see \cite[p. 227]{FSi}), whereas \eqref{HPW-R} represents an important quantitative counterpart. For a comprehensive overview of the history of various uncertainty principles in harmonic analysis, as well as the significance of this inequality and its generalizations to other \( L^p \) norms, we refer the reader to the survey article by Folland and Sitaram \cite{FSi}. We refer the reader to Section \ref{prelim} for any undefined notions and symbols in this section.  

As is often the case in mathematics, numerous researchers, guided by mathematical intuition, have explored various generalizations of the Heisenberg--Pauli--Weyl (HPW) inequality. To establish a formal foundation for this discussion, we first note that, in view of the Plancherel formula, the inequality can be reformulated as  
\begin{equation}
    \label{HPW-Rn}
    \|f\|_2^{\alpha+\beta}\leq C(\alpha,\beta,n) \big\|\|\cdot\|^\alpha f\big\|^\beta_2 ~\|(-\Delta_{\R^n})^{\frac{\beta}2}f\|^\alpha_2,
\end{equation}
where $\Delta_{\R^n}$ is the Laplacian on $\R^n$ and the operator $(-\Delta_{\R^n})^{\frac{\beta}2}$ is defined by
\begin{equation}
    \label{FT-of-laplace power}
    \widehat{(-\Delta_{\R^n})^{\frac{\beta}2}f}(\xi)=\|\xi\|^{\beta}\widehat{f}(\xi)\quad\quad (f\in C_c^{\infty}(\R^n)) .
\end{equation} 
The HPW inequality, expressed in this form, does not involve the Fourier transform, thereby allowing for generalizations to broader settings, where the Laplacian is replaced by a positive, self-adjoint operator and $\|\cdot\|$ is substituted with a suitable distance function. 

The literature in this field is so extensive that it is not feasible to refer to every relevant paper in this direction.  However, we will highlight key works that have significantly influenced this area of research and help shape the path we intend to take.  Thangavelu \cite{Th} was the first to establish an analogue of the HPW inequality \eqref{HPW-Rn} for $\alpha = \beta = 1$, replacing the Laplacian with Hermite and special Hermite operators, as well as the sub-Laplacian on Heisenberg groups. Later, Sitaram--Sundari--Thangavelu \cite{SST} extended the result for the Heisenberg group to the range $0 < \alpha = \beta < Q/2$. Xiao--He \cite{XH} further generalized the inequality by proving it for all $\alpha, \beta > 0$ on Heisenberg groups. Ray \cite[Theorem 4.1]{R} established an analogue of \eqref{HPW-Rn} on two-step nilpotent Lie groups for $\alpha = \beta = 1$, where the Laplacian is replaced by the sub-Laplacian of the group. Ciatti--Ricci--Sundari \cite{CRS} extended these results to two-step nilpotent Lie groups. Later, the same authors further extended \eqref{HPW-Rn} to a broader setting of Lie groups with polynomial volume growth, replacing the Laplacian with a H\"ormander-type sub-Laplacian (see \cite[Theorem 2.1]{CRS1}). Finally, Ciatti--Cowling--Ricci established the following more general version of \eqref{HPW-Rn} by replacing the $L^2$ norm with $L^p$ norms in the context of stratified Lie groups.
\begin{thm}[\cite{CCR}]
\label{Cowling HPW}
    Let $G$ be a stratified Lie group equipped with a sub-Laplacian $L$, and let $|\cdot|$ denote a homogeneous norm. Assume that $\beta,\:\delta\in(0,\infty)$, $p,\:r\in(1,\infty)$, $s \geq 1$, and that  
    $$
    \frac{\beta+\delta}{p} = \frac{\beta}{r} + \frac{\delta}{s}.
    $$
Then, there exists a constant $C>0$ such that for any $f \in C_c^\infty(G)$, the following holds: 
 \begin{equation}
    \label{CCR-HPW-stratified}
    \|f\|_p\leq C\||\cdot|^\beta~f\|^{\frac{\delta}{\beta+\delta}}_s\|L^{\delta/2}f\|_r^{\frac{\beta}{\beta+\delta}}.
\end{equation}
\end{thm}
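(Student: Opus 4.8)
The plan is to obtain \eqref{CCR-HPW-stratified} by interpolating, through Hölder's inequality, between the purely weighted quantity $\||\cdot|^{\beta}f\|_s$ and a purely negative-order quantity $\||\cdot|^{-\delta}f\|_r$, and then absorbing the latter into $\|L^{\delta/2}f\|_r$ by means of an $L^r$ Hardy inequality on $G$. As orientation, note that \eqref{CCR-HPW-stratified} is invariant under the automorphic dilations of $G$: since $|\cdot|$ is homogeneous of degree one, $L$ is homogeneous of degree two, and these dilations multiply Haar measure by $t^{Q}$ (with $Q$ the homogeneous dimension of $G$), replacing $f(x)$ by $f(t\cdot x)$ scales both sides of \eqref{CCR-HPW-stratified} by the same power of $t$ precisely when $\frac{\beta+\delta}{p}=\frac{\beta}{r}+\frac{\delta}{s}$. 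This explains the hypothesis and shows that nothing is lost by first proving a localized, additive form of the inequality and then optimizing over dilations.

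First I would carry out the interpolation. Put $\theta=\frac{\beta}{\beta+\delta}\in(0,1)$, so that $\delta\theta=\beta(1-\theta)$, and write, for $f\in C_c^\infty(G)$,
\begin{equation*}
|f|^{p}=\big(|\cdot|^{-\delta}|f|\big)^{p\theta}\,\big(|\cdot|^{\beta}|f|\big)^{p(1-\theta)},
\end{equation*}
where the powers of $|\cdot|$ cancel exactly because of the choice of $\theta$. Integrating over $G$ and applying Hölder's inequality with the exponents $\frac{r}{p\theta}$ and $\frac{s}{p(1-\theta)}$ — which are conjugate precisely because $\frac{\beta}{r}+\frac{\delta}{s}=\frac{\beta+\delta}{p}$, and each of which strictly exceeds $1$ because that same identity forces $\frac{\beta}{r}$ and $\frac{\delta}{s}$ to be strictly smaller than $\frac{\beta+\delta}{p}$ — yields
\begin{equation*}
\|f\|_p\le\big\||\cdot|^{-\delta}f\big\|_r^{\theta}\,\big\||\cdot|^{\beta}f\big\|_s^{1-\theta}.
\end{equation*}
It then remains to prove the Hardy-type inequality $\||\cdot|^{-\delta}f\|_r\le C\|L^{\delta/2}f\|_r$ on $G$. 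When $0<\delta<Q/r$ I would set $g=L^{\delta/2}f$, so that $f=L^{-\delta/2}g$, and use that the Riesz potential $L^{-\delta/2}$ is convolution against a kernel comparable to $|x|^{\delta-Q}$ (Folland--Stein); the claim then reduces to a weighted Hardy--Littlewood--Sobolev (Stein--Weiss) inequality on $G$, which is available in the literature. Combined with the interpolation, this gives \eqref{CCR-HPW-stratified} with the stated exponents $\frac{\delta}{\beta+\delta}$ and $\frac{\beta}{\beta+\delta}$.

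The main obstacle is the complementary range $\delta\ge Q/r$, where $|\cdot|^{-\delta}f$ need not belong to $L^r$ and the argument above collapses. To handle it I would fix a radius $R>0$ and split $f$ by a cutoff at $|x|=R$: outside the ball $B_R$ one has $|\cdot|^{\beta}\gtrsim R^{\beta}$, so that part is controlled by $R^{-\beta}\,\||\cdot|^{\beta}f\|_s$ up to an elementary integrability estimate, while inside $B_R$ — where, because $\delta\ge Q/r$, one sits in the Morrey (or borderline) regime of the Folland--Stein Sobolev embedding — one bounds $\|\cdot\|_{L^p(B_R)}$ in terms of $\|L^{\delta/2}\cdot\|_r$ and lower-order terms, after controlling the commutator of $L^{\delta/2}$ with the cutoff. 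Optimizing the resulting additive estimate over $R$, equivalently exploiting the dilation invariance noted above, recovers the multiplicative inequality \eqref{CCR-HPW-stratified}. Carrying the exponents cleanly through this decomposition, and verifying that the Sobolev and weighted estimates are valid in exactly the ranges dictated by $\frac{\beta+\delta}{p}=\frac{\beta}{r}+\frac{\delta}{s}$, is where the bulk of the technical work lies; the Hardy inequality of the previous step is the conceptual core of the result.
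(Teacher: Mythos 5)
First, a point of orientation: the paper does not prove Theorem \ref{Cowling HPW} at all --- it is imported verbatim from Ciatti--Cowling--Ricci \cite{CCR} --- so the only meaningful comparison is with the proof in that reference. Your first two steps are exactly theirs: the pointwise factorization $|f|^p=\bigl(|\cdot|^{-\delta}|f|\bigr)^{p\theta}\bigl(|\cdot|^{\beta}|f|\bigr)^{p(1-\theta)}$ with $\theta=\beta/(\beta+\delta)$, H\"older with exponents $r/(p\theta)$ and $s/(p(1-\theta))$ (which are conjugate precisely because of the scaling relation, as you check), and then the Hardy inequality $\||\cdot|^{-\delta}f\|_r\lesssim\|L^{\delta/2}f\|_r$, which is the central result of \cite{CCR} and is valid for $1<r<\infty$ and $0<\delta<Q/r$ (your reduction of it to a Stein--Weiss bound for the Riesz potential $L^{-\delta/2}$, whose kernel is homogeneous of degree $\delta-Q$, is legitimate in that range). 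Up to this point the argument is correct and complete.

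The gap is the complementary range $\delta\geq Q/r$, which the theorem as stated includes. Your proposed remedy there --- cut off at $|x|=R$, invoke a borderline Folland--Stein embedding on $B_R$, ``control the commutator of $L^{\delta/2}$ with the cutoff,'' and optimize over $R$ --- is a program rather than a proof, and the two items you defer are precisely the hard ones: (i) $L^{\delta/2}$ is a nonlocal operator, so $[L^{\delta/2},\chi_{B_R}]$ is not a harmless lower-order local error and there is no off-the-shelf estimate for it in terms of the three admissible quantities; (ii) in the supercritical regime the Sobolev embedding controls $\|f\|_{L^\infty(B_R)}$ only by the full inhomogeneous norm $\|f\|_r+\|L^{\delta/2}f\|_r$, and $\|f\|_r$ is not one of $\|f\|_p$, $\||\cdot|^\beta f\|_s$, $\|L^{\delta/2}f\|_r$ unless $r=p$, so it cannot simply be absorbed. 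A more standard route to large exponents (and the one this very paper uses in its $p=2$ argument for $\beta>2$ and $\gamma>Q/2$) is a moment reduction: prove the inequality for small $\delta_0<Q/r$ and $\beta_0<Q/s'$ and upgrade via interpolation inequalities of the form $\|L^{\delta_0/2}g\|_r\lesssim\|g\|_r^{1-\delta_0/\delta}\|L^{\delta/2}g\|_r^{\delta_0/\delta}$ and $\||\cdot|^{\beta_0}g\|_s\lesssim\|g\|_s^{1-\beta_0/\beta}\||\cdot|^{\beta}g\|_s^{\beta_0/\beta}$; even then one must track how the constrained triple $(p,r,s)$ moves under the reduction. As written, your argument establishes \eqref{CCR-HPW-stratified} only for $0<\delta<Q/r$.
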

By utilizing homogeneity, one can verify that for the inequality \eqref{CCR-HPW-stratified} to hold, the condition on $(p, r, s)$ must necessarily be satisfied. This makes the theorem the most general result in its class. However, it is important to highlight the work of Martini \cite{M}, who extended the $L^2$-HPW inequality to broader settings. His results not only encompass the groups discussed above but also apply to spaces with exponential volume growth. Furthermore, employing a completely different approach based on the isoperimetric inequality, Dall'Ara and Trevisan \cite{DT} established an $L^p$ version of \eqref{HPW-Rn} for a fairly general class of spaces, including non-compact Riemannian symmetric spaces, with $\beta = 1$ and $(-\Delta_{\R^n})^{1/2}$ replaced by an invariant gradient. Later, Martín--Milman \cite{MM} extended the $p=$ case to general metric measure spaces without assuming any underlying group structure. Notably, their approach remains valid in the Gaussian setting as well.

As evident from the above discussion, the \( L^p \)-HPW inequality has only been studied beyond Euclidean spaces by incorporating powers of positive self-adjoint operators, specific to each setting on the right-hand side.  

In the context of the Fourier transform on $\mathbb{R}^n$, Steinerberger~\cite[Theorem~1]{St} recently investigated an $L^1$ variant of~\eqref{HPW-R}. Subsequently, Xiao \cite[p. 273]{X} extended his result to the full range \( 1 < p < \infty \). More precisely, they proved the following.
\begin{thm}
    \label{Stein-Xiao}
    Let $1\leq p<\infty.$ Suppose that $\alpha,\beta\in(0,\infty)$ are such that $\beta> n(1/p-1/2)$ whenever $1\leq p \leq2$, and $\alpha<n/p'$ whenever $p>2.$ Then, for any $f\in L^p(\R^n)$, one has 
    \begin{equation}
    \label{Lp-HPW-Rn}
         \|f\|_p^{\alpha+\beta}\leq C(\alpha,\beta,n,p) \left(\int_{\R^n}(\|x\|^{\alpha}|f(x)|)^p~dx\right)^{\frac{\beta}p}\left(\int_{\R^n}(\|\xi\|^{\beta}|\widehat{f}(\xi)|)^{p'}~d\xi\right)^{\frac{\alpha}{p'}}.
    \end{equation}
\end{thm}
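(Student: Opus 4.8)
The plan is to prove the estimate by a physical-space/frequency-space decomposition combined with an optimization over the truncation radii, treating the ranges $1\le p<2$, $p=2$, and $p>2$ separately. The case $p=2$ is precisely the classical inequality \eqref{HPW-R}, so nothing new is needed there. Throughout I write $A=\big\|\|\cdot\|^\alpha f\big\|_p$ and $B=\big\|\|\cdot\|^\beta\widehat f\big\|_{p'}$, assume $A,B<\infty$ (otherwise there is nothing to prove), and keep radii $R,\rho>0$ free until the end.

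For $1\le p<2$ I would start from
$$\|f\|_p^p=\int_{\|x\|\le R}|f|^p\,dx+\int_{\|x\|>R}|f|^p\,dx\le \int_{\|x\|\le R}|f|^p\,dx+R^{-\alpha p}A^p,$$
the last step just from $\|x\|^{-\alpha}\le R^{-\alpha}$ on $\{\|x\|>R\}$. To control the local term, split $f=g+h$ with $\widehat g=\widehat f\,\mathbf 1_{\{\|\xi\|\le\rho\}}$ and $\widehat h=\widehat f\,\mathbf 1_{\{\|\xi\|>\rho\}}$ (legitimate since $\widehat f\in L^{p'}$ by Hausdorff--Young). For $g$: Hölder on the ball gives $\|g\|_{L^p(B_R)}\le|B_R|^{1/p-1/p'}\|g\|_{p'}$; applying Hausdorff--Young to $g$, then Hölder again on $B_\rho$, then Hausdorff--Young to $f$ yields $\|g\|_{L^p(B_R)}\le C\,(R\rho)^{n(1/p-1/p')}\|f\|_p$. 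For $h$ one uses $L^2$ and Plancherel instead: $\|h\|_{L^p(B_R)}\le|B_R|^{1/p-1/2}\|\widehat h\|_2$, and after writing $|\widehat f|^2=\|\xi\|^{-2\beta}(\|\xi\|^\beta|\widehat f|)^2$ and applying Hölder with exponents $(p'/2,(p'/2)')$ on $\{\|\xi\|>\rho\}$, the hypothesis $\beta>n(1/p-1/2)$ is exactly what makes $\int_{\|\xi\|>\rho}\|\xi\|^{-2\beta(p'/2)'}\,d\xi$ converge, giving $\|h\|_{L^p(B_R)}\le C\,R^{n(1/p-1/2)}\rho^{n(1/2-1/p')-\beta}B$. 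Now take $\rho=\eps/R$: using $1/p+1/p'=1$, the powers of $R$ in the $h$-term collapse to $R^{\beta}$, while the $g$-term becomes $C\eps^{n(1/p-1/p')}\|f\|_p$; since $1/p-1/p'>0$ for $p<2$, choosing $\eps$ small absorbs the $g$-contribution into the left side, leaving $\|f\|_p^p\le C\,R^{\beta p}B^p+C\,R^{-\alpha p}A^p$. Minimizing over $R$ (the two terms balance at $R^{\alpha+\beta}\asymp A/B$) gives $\|f\|_p^{\alpha+\beta}\le C\,A^\beta B^\alpha$.

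For $p>2$ (so $p'<2$) I would decompose only in frequency: fix $\chi\in C_c^\infty(\R^n)$ with $\chi\equiv1$ on $\overline{B_1}$ and $\mathrm{supp}\,\chi\subset B_2$, set $\chi_\rho(\xi)=\chi(\xi/\rho)$, and put $h_\rho=\mathcal F^{-1}((1-\chi_\rho)\widehat f)$ and $g_\rho=f-h_\rho=\mathcal F^{-1}(\chi_\rho\widehat f)=f*K_\rho$ with $K_\rho=\mathcal F^{-1}\chi_\rho$ a dilate of a fixed Schwartz function. Since $1-\chi_\rho$ is supported where $\|\xi\|^\beta\gtrsim\rho^\beta$, Hausdorff--Young (valid as $p'\le 2$) gives $\|h_\rho\|_p\le C\|(1-\chi_\rho)\widehat f\|_{p'}\le C\rho^{-\beta}B$. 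For the band-limited piece I would write $f=\|\cdot\|^{-\alpha}\cdot(\|\cdot\|^\alpha f)$ with $\|\cdot\|^{-\alpha}\in L^{n/\alpha,\infty}$ and $\|\cdot\|^\alpha f\in L^p$, so that by Hölder for Lorentz spaces $f\in L^{s,p}$ with $1/s=1/p+\alpha/n$; the condition $\alpha<n/p'$ is exactly $1/s<1$, which is what allows Young's convolution inequality for Lorentz spaces to apply and gives $\|g_\rho\|_p=\|f*K_\rho\|_p\le C\|f\|_{L^{s,p}}\|K_\rho\|_{L^{n/(n-\alpha),1}}\le C\rho^{\alpha}A$. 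Then $\|f\|_p\le\|g_\rho\|_p+\|h_\rho\|_p\le C\rho^{\alpha}A+C\rho^{-\beta}B$, and optimizing over $\rho$ (balance at $\rho^{\alpha+\beta}\asymp B/A$) again yields $\|f\|_p^{\alpha+\beta}\le C\,A^\beta B^\alpha$.

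The main obstacle is getting the \emph{sharp} exponent $\beta>n(1/p-1/2)$ when $1\le p<2$: a crude estimate of the high-frequency tail that stays in $L^{p'}$ and only uses Hausdorff--Young delivers merely $\beta>2n(1/p-1/2)$, so it is essential to treat that tail in $L^2$ via Plancherel and interpolate back to $L^p(B_R)$ by Hölder on the ball. A secondary point is the absorption step, which works precisely because the low-frequency error carries a positive power of $R\rho$; this degenerates at $p=2$, where one instead invokes the classical inequality \eqref{HPW-R}. One should also note at the outset that the statement is vacuous when the right-hand side is infinite, that for $1\le p\le2$ every object is an honest function by Hausdorff--Young, and that for $p>2$ the finiteness of $\big\|\|\cdot\|^\beta\widehat f\big\|_{p'}$ is exactly what makes $\widehat f$ a well-defined locally integrable function away from the origin.
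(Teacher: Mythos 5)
Your proposal is essentially correct, but note that the paper never proves Theorem \ref{Stein-Xiao}: it is quoted from Steinerberger \cite{St} ($p=1$) and Xiao \cite{X}, so the only argument in the paper to compare with is the proof of its Métivier-group analogue, Theorem \ref{main result}. That proof runs differently: using dilation invariance the authors normalize $\|f\|_p=\||\cdot|^\gamma f\|_p=1$, show $f\in L^2$, bound the low-frequency spectral mass from below (Hölder on a ball plus the Plancherel formula) and the high-frequency mass from above (Hausdorff--Young, Hölder, and the convergence of the tail sum/integral, which is exactly where $\beta>Q(1/p-1/2)$ enters), and conclude a positive lower bound for the Fourier-side factor. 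You instead prove the Euclidean statement directly by a two-parameter truncation — space at radius $R$, frequency at radius $\rho=\eps/R$ — with an absorption of the band-limited error into $\|f\|_p$ and an optimization in $R$; the hypothesis $\beta>n(1/p-1/2)$ appears, as in the paper, as the convergence condition for $\int_{\|\xi\|>\rho}\|\xi\|^{-2\beta(p'/2)'}d\xi$ after passing the tail through Plancherel. For $p>2$ you give a genuinely self-contained argument (smooth frequency cut, Hausdorff--Young for $p'<2$, and O'Neil's Lorentz-space Hölder and Young inequalities, with $\alpha<n/p'$ exactly ensuring $1/p+\alpha/n<1$), whereas the paper simply derives the $p>2$ case from Theorem \ref{Cowling HPW} by dual Hausdorff--Young. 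What each route buys: your direct splitting is shorter and avoids the normalization/lower-bound framing, but it relies on pointwise truncation of $\widehat f$, which has no direct analogue for operator-valued Fourier transforms; the paper's scheme (following \cite{St}) replaces that truncation by the spectral cut $\zeta(\alpha,\lambda)\le r$ and Schatten-norm bookkeeping (Lemma \ref{p12lemma}), and is precisely what generalizes to M\'etivier groups.

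Two minor points you should make explicit in a write-up: for $p>2$ the statement implicitly assumes that $\widehat f$ coincides with a measurable function away from the origin (you flag this correctly), and before optimizing in $R$ (or $\rho$) one should dispose of the trivial cases where $A$ or $B$ vanishes or is infinite, and note that the absorption step uses $\|f\|_p<\infty$; all of this is routine.
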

  Notice that, by the Plancherel formula, the HPW inequalities \eqref{HPW-R} and \eqref{HPW-Rn} are equivalent; however, their \( L^p \) counterparts for \( p \neq 2 \) are not. In fact, by the Hausdorff--Young inequality, the Fourier transform version \eqref{Lp-HPW-Rn} is sharper whenever \( 1 \leq p < 2 \). However, by the dual Hausdorff--Young inequality, Xiao's result for $p>2$ follows from Theorem~\ref{Cowling HPW}.  This version has garnered significant attention in recent years; see, for example, \cite{FX}, \cite{CDM}. However, beyond Euclidean spaces—such as in two-step nilpotent Lie groups—the Fourier transform is operator-valued, making it considerably more challenging to formulate and establish a precise analogue of \eqref{Lp-HPW-Rn} in such settings. In fact, the sharper version involving the Fourier transform presented in the theorem above has drawn our interest and serves as the primary motivation for investigating the possibility of an \( L^p \) version of the HPW inequality beyond Euclidean spaces.

In this article, we aim to establish an $L^p$ version of the HPW uncertainty inequality for the group Fourier transform on M\'etivier groups. To present our results, we introduce the necessary notation with minimal explanation, deferring a more detailed discussion to Section \ref{prelim}. Let $G$ be a M\'etivier group with Lie algebra $\mathfrak{g} = \mathfrak{g}_1 \oplus \mathfrak{g}_2$, where  $[\g_1,\g_1]=\g_2$, and so $\g_2$ is contained  the center   of $\g$.  It is known that there exists a Zariski-open subset $\Lambda \subset \mathfrak{g}_2^*\setminus\{0\}$ of full measure, which parametrizes the irreducible unitary representations of $G$ relevant to the Plancherel measure. Moreover, for each $\lambda \in \Lambda$, the corresponding representation $\pi_{\lambda}$ is realized on $L^2(\mathfrak{p}_{\lambda})$, where $\mathfrak{p}_{\lambda}$ is a subspace of $\mathfrak{g}_1$. For a suitable function \( f \) on \( G \), the group Fourier transform \( \mathcal{F}(f) \) is an operator-valued function on \( \Lambda \), where for each \( \lambda \in \Lambda \), \( \mathcal{F}(f)(\lambda) \) is a bounded operator on \( L^2(\mathfrak{p}_{\lambda}) \). Since the Fourier transform is operator-valued, deriving an analogue of the Fourier-side expression on the right-hand side of \eqref{Lp-HPW-Rn} requires careful observation. In this setting, the sub-Laplacian \( \mathcal{L} \) serves as the analogue of the Laplacian, and its Fourier transform satisfies  
$$\mathcal{F}(\mathcal{L}^{\beta/2} f)(\lambda) = \mathcal{F}(f)(\lambda) H(\eta(\lambda))^{\beta/2},$$  
where \( H(\eta(\lambda)) \) denotes the generalized scaled Hermite operator (see \eqref{scaledher}). As a result, the term \( \|\xi\|^{\beta} \) is naturally replaced by \( H(\eta(\lambda))^{\beta/2} \) in this framework. A similar approach was employed in the special case of Heisenberg groups in \cite{SST} to address the \( L^2 \)-case. This insight naturally leads us to the following precise analogue of Theorem \ref{Stein-Xiao}. 
\begin{thm}
\label{main result}
    Let $1\leq p< 2$. Assume that $\gamma>0,~\beta>Q(1/p-1/2)$, and $f\in L^p(G).$ Then for $p=1$, we have
    \begin{align*}
       \|f\|^{\gamma+\beta}_1\leq C(\gamma,\beta, n,k)\left(\int_{G}|x|^\gamma |f(x)|dx\right)^\beta~\left( \sup_{\lambda\in \Lambda} \|\mathcal{F}(f)(\lambda) H(\eta(\lambda))^{\frac\beta2}\|_{\text{op}}\right)^\gamma. 
    \end{align*}
    and for $p>1$, we have
    \begin{align*}
        \|f\|_p^{\gamma+\beta}\leq C(\gamma,\beta, n,k,p) \||\cdot|^{\gamma}f\|_p^\beta \left( \int_{\Lambda} \| \F (f)(\lambda)H(\eta(\lambda))^{\frac{\beta}{2}}\|^{p'}_{S_{p'}(L^2(\mathfrak{p}_\la))} |\pf(\lambda)|~d\lambda\right)^{\frac{\gamma}{p'}},
    \end{align*}
     
\end{thm}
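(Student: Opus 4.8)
The plan is to exploit the dilation structure of $G$ together with a Nash–Gagliardo–Nirenberg-type inequality relating $\|f\|_p$, a weighted norm $\||\cdot|^\gamma f\|_p$, and a Sobolev-type norm measured on the Fourier side through $H(\eta(\lambda))^{\beta/2}$. First I would establish the key scale-invariant estimate, namely an inequality of the form
\[
\|f\|_p \;\le\; C\,\||\cdot|^{\gamma}f\|_p^{\theta}\,N_\beta(f)^{1-\theta},\qquad \theta=\frac{\beta}{\gamma+\beta},
\]
where $N_\beta(f)$ is the $S_{p'}$-Fourier norm appearing on the right (resp. the operator-norm supremum when $p=1$). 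The exponents $\theta$ and $1-\theta$ are forced by homogeneity: writing $\delta_t$ for the automorphic dilations on $G$ and recalling that $|\delta_t x| = t|x|$, $\mathcal F(f\circ\delta_t)(\lambda)$ rescales as a conjugate of $\mathcal F(f)(t^{-2}\lambda)$ up to a Jacobian factor $t^{-Q}$, and $H(\eta(t^{-2}\lambda)) = t^{-2}H(\eta(\lambda))$ after the metaplectic change of variables. Plugging $f\circ\delta_t$ into a (not-yet-scale-invariant) additive inequality $\|g\|_p \le A\,\||\cdot|^\gamma g\|_p + B\,N_\beta(g)$ and optimizing over $t>0$ then yields the multiplicative form with the correct exponents. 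So the real work is to prove the additive inequality for a single scale.

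For the additive inequality I would split $f = f\mathbf 1_{\{|x|\le 1\}} + f\mathbf 1_{\{|x|>1\}}$. On the far region $\{|x|>1\}$ we trivially have $\|f\mathbf 1_{\{|x|>1\}}\|_p \le \||\cdot|^\gamma f\|_p$, which contributes the first term. On the near region I would control $\|f\mathbf 1_{\{|x|\le 1\}}\|_p$ by the Fourier-side norm $N_\beta(f)$; this is the step where the condition $\beta > Q(1/p - 1/2)$ enters, and it is essentially a Sobolev embedding on $G$ for the fractional sub-Laplacian. Concretely, for $p>1$ one writes $f = \mathcal L^{-\beta/2}(\mathcal L^{\beta/2}f)$, so on the Fourier side $\mathcal F(f)(\lambda) = \big(\mathcal F(f)(\lambda)H(\eta(\lambda))^{\beta/2}\big)H(\eta(\lambda))^{-\beta/2}$; then one applies the Hausdorff–Young inequality for the group Fourier transform (bounding $\|f\|_p \le \|f\|_\infty^{1-2/p'}\cdots$ on a bounded set, or more carefully using the $S_{p'}$ Hausdorff–Young estimate stated in the preliminaries), together with the estimate that $H(\eta(\lambda))^{-\beta/2}$ lies in an appropriate Schatten class uniformly in $\lambda$ once $\beta$ is large enough — quantitatively, the eigenvalues of $H(\eta(\lambda))$ grow like $|\eta(\lambda)|$ times $(2m+d_\lambda)$-type sums, and $\sum (2m+d)^{-\beta q/2}$ converges precisely when $\beta > Q(1/p-1/2)$ after accounting for the Plancherel weight $|\mathrm{Pf}(\lambda)|\,d\lambda$. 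I expect this to reduce to a clean application of Hölder on $\Lambda$ plus a convergent Hermite-eigenvalue sum. For $p=1$ the same scheme applies with the operator norm and $S_1$–$S_\infty$ duality, which is simpler.

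The main obstacle I anticipate is the near-region Sobolev step: making precise the claim that multiplication by $H(\eta(\lambda))^{-\beta/2}$ on the Fourier side is a bounded operation from the relevant Schatten-valued $L^{p'}(\Lambda)$ space into $L^1$-on-a-ball, with the sharp exponent threshold $\beta>Q(1/p-1/2)$. This requires a uniform-in-$\lambda$ handle on the spectral data of the generalized scaled Hermite operators $H(\eta(\lambda))$ and careful bookkeeping of the Plancherel density $|\mathrm{Pf}(\lambda)|$ — this is exactly where the M\'etivier hypothesis (non-degeneracy of the skew form $\omega_\lambda$ for $\lambda\in\Lambda$, hence $\mathfrak p_\lambda$ of constant dimension and $\mathrm{Pf}(\lambda)\ne 0$) is used. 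Once that estimate is in hand, the homogeneity/optimization argument is routine, and the restriction to $1\le p<2$ (so that $p'\ge 2$ and Hausdorff–Young points the right way) is precisely what allows the Fourier-side formulation to be the sharp one, mirroring the discussion after Theorem~\ref{Stein-Xiao}.
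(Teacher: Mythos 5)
Your overall scheme (prove a scale-non-invariant additive inequality, then optimize over dilations) is sound in principle, and your dilation bookkeeping matches Lemma \ref{dila-FT}. The gap is in the near-region step. The estimate you propose there, $\|f\mathbf 1_{\{|x|\le 1\}}\|_p\lesssim N_\beta(f)$ with $N_\beta(f)$ the Fourier-side quantity alone, is simply false in part of the admissible range of $\beta$: taking $f_t=t^{Q/p}\,\delta_t g$ with $g$ continuous and $g(e)\neq 0$, one computes from Lemma \ref{dila-FT} and \eqref{dil-Hlambda} that $N_\beta(f_t)=t^{\beta}N_\beta(g)$, while $\|f_t\mathbf 1_{\{|x|\le1\}}\|_p^p=\int_{|y|\le t}|g|^p\asymp t^{Q}$ as $t\to0$; so your claimed bound forces $\beta\le Q/p$, whereas the theorem only assumes $\beta>Q(1/p-1/2)$. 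More fundamentally, the mechanism you propose for proving it --- writing $\F(f)(\la)=\bigl(\F(f)(\la)H(\eta(\la))^{\beta/2}\bigr)H(\eta(\la))^{-\beta/2}$ and placing $H(\eta(\la))^{-\beta/2}$ in a Schatten class uniformly over $\Lambda$ --- cannot work for any $\beta$, because by \eqref{pfest}--\eqref{eigenvalueest} the relevant integral $\int_\Lambda\sum_{\alpha}\zeta(\alpha,\la)^{-\beta s}\,|\pf(\la)|\,d\la$ behaves like $\sum_\alpha(|\alpha|+n)^{-\beta s}\int\|\la\|^{n-\beta s}\,d\la$, which converges at infinity only when $\beta s>n+k$ and near $\la=0$ only when $\beta s<n+k$. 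A homogeneous norm built from $H(\eta(\la))^{\beta/2}$ gives no control over low frequencies, so no bound of the $L^p$ mass on a fixed ball by $N_\beta(f)$ alone is available; this is the same obstruction as for the (false) embedding $\|f\|_{L^1(B_1)}\lesssim\sup_\xi\|\xi\|^\beta|\widehat f(\xi)|$ on $\R^n$.

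The missing idea is that the decisive splitting must be made on the frequency side, not on the space side: the low-frequency block $\{\zeta(\alpha,\la)\le r\}$ has to be paid for with $\|f\|_p$ itself, via $\|\F(f)(\la)\|_{\text{op}}\le\|f\|_1$ when $p=1$ or the Hausdorff--Young inequality \eqref{hyinq} when $1<p<2$, and only the block $\{\zeta(\alpha,\la)>r\}$ is controlled by the weighted Fourier norm (this is where $\beta>Q(1/p-1/2)$ enters, exactly as in your convergent Hermite-eigenvalue sum). The paper then normalizes $\|f\|_p=\||\cdot|^\gamma f\|_p=1$, uses the weight to show a definite fraction of the $L^p$ mass lies in a fixed ball, upgrades this via H\"older and Plancherel to a lower bound on the spectral mass in a bounded frequency region, and finally bounds that annular spectral mass from above by the Fourier-side norm, yielding the required lower bound $N_\beta(f)\gtrsim 1$. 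Your proposal compresses the low- and high-frequency regimes into a single Sobolev step, and that is precisely where it breaks; the rest of your outline (scaling reduction, role of the M\'etivier hypothesis, convergence threshold) is consistent with the paper.
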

Here, $Q$ is the homogeneous dimension of $G$, $|\cdot|$ denotes a homogeneous norm on $G$, and $\|\cdot\|_{S_{p'}(L^2(\mathfrak{p}_\la))}$ denotes the Schatten $p'$-norm on the Hilbert space $L^2(\mathfrak{p}_\la)$ (see Appendix \ref{schatt}).

This inequality demonstrates a quantitative uncertainty principle in the realm of two-step nilpotent groups, illustrating the limitation of simultaneously localizing a function in some weighted $L^p$ norm and its operator-valued Fourier transform in an appropriate Schatten norm. The key novelty of our proof lies in extending spectral techniques for the operator-valued Fourier transform beyond $L^2(G)$, combined with the use of the inherent dilation structure present in the group. The dilation structure in the Euclidean setting is isotropic, whereas in \( G \), it is non-isotropic. As a result, the appearance of the homogeneous dimension \( Q \) in Theorem \ref{main result}, rather than the topological dimension as in the Euclidean case, is a natural consequence.

Now, we compare our result with previously established analogues of the HPW inequality in the context of nilpotent Lie groups. Our primary focus is on the works of Ciatti--Cowling--Ricci \cite{CCR} and Dall’Ara--Trevisan\cite{DT}, as these are the only two studies that address the $L^p$ case.  

In light of the Hausdorff--Young inequality, our result strengthens Theorem~\ref{Cowling HPW} in the range $1<p<2$ for M\'etivier groups. Additionally, our case for $p = 1$ is entirely novel, as \cite{CCR} does not address the case $p = 1$. 

On the other hand, the work of \cite{DT} addresses the general case of unimodular Lie groups equipped with a system of left-invariant vector fields that generate their Lie algebras. Their approach is geometric, establishing a link between the $p=1$ case and a weak isoperimetric inequality, then extending the results to $L^p$ by reducing it to $L^1$. However, their method is specifically designed to prove the inequality involving the gradient (analogue of $\mathcal{L}^{1/2}$) and does not extend to settings involving other powers of the sub-Laplacian $\mathcal{L}$. In contrast, our results accommodate various powers of the sub-Laplacian and utilize the group Fourier transform, refining, and generalizing the work of \cite{DT} in the context of  M\'etivier groups.

As far as we know, Theorem \ref{main result} is new, even for several well-known and extensively studied examples of M\'etivier groups—such as Heisenberg groups, \( H \)-type groups. Unlike the approaches in \cite{CCR} and \cite{CRS}, our method relies on the explicit representation theory of the group, which confines our analysis to the M\'etivier group setting.  We note that the case $p=2$ follows directly from~\eqref{CCR-HPW-stratified} by using the Plancherel formula~\eqref{plan}. It is also worth noting that our method does not readily extend to the \( p > 2 \) case due to technical challenges in estimating Schatten norms. However, the \( p > 2 \) version follows from \eqref{CCR-HPW-stratified} via the dual Hausdorff--Young inequality for the group Fourier transform.

We conclude the introduction with a brief outline of the paper. In Section 2, we introduce the necessary background on harmonic analysis on M\'etivier groups and gather necessary results. Then, in Section 3, we present the proof of our main result.

\section{Preliminaries}\label{prelim}
In this section, we introduce the necessary notations and review fundamental results on harmonic analysis for M\'etivier groups, which are essential for this paper. Most of these definitions and results are drawn from \cite{FS, BKS, BFG, CRS}.
\subsection{Basic Notations}  The letters  $\R,\, \R^+, \,\C$ and $\N$ denote respectively the set of real numbers, positive real numbers, complex numbers, and nonnegative integers. For $1 < p < \infty$, let $p':=\frac p{p-1}$ be the conjugate exponent of $p$. For $p=1$, we define $p'=\infty$ and for $p=\infty$, we define $p'=1$. For a measure space $Y$,  let $L^p(Y)$ denote the usual Lebesgue spaces over $Y$. We denote by $\|f\|_p$ the $L^p$ norm of $f\in L^p(G)$, where $G$ is the group we are working on. Throughout this article, the symbols $ c, C, C_1 $, etc., denote positive constants whose values may change with each occurrence. Everywhere in this article, the notation $f_1 \lesssim f_2$ (respectively, $f_1 \gtrsim f_2$) indicates the existence of positive constants (depending only on the space) $C_1$ and $C_2$ such that $f_1 \leq C_1 f_2$ (respectively, $f_1 \geq C_2 f_2$). We write $f_1 \asymp f_2$ if both $f_1 \lesssim f_2$ and $f_2 \lesssim f_1$ hold. Additionally, we use $C({\varepsilon})$ to denote a constant that depends on the parameter $\varepsilon$. We denote by $\|T\|_{\text{op}}$ the operator norm of the linear operator $T$ on a Banach space $X$ and by $\|\cdot\|_X$ the norm of $X$.
 
\subsection{Two-step nilpotent Lie groups}
A Lie algebra $\mathfrak{g}$ over $\R$ is called two-step nilpotent if $[\g,[\g,\g]]=0$ and $[\g,\g] \neq 0$. The connected, simply connected Lie group $G$ corresponding to such a $\mathfrak{g}$ is called a two-step nilpotent Lie group.

Let $G$ be a connected, simply connected, two-step nilpotent Lie group with the Lie algebra $\g$. We write $\g=\g_1 \oplus \g_2$, where  $[\g_1,\g_1]=\g_2$, and so $\g_2$ is contained in $\mz$, the center   of $\g$. We choose an inner product $\langle\cdot,\cdot\rangle$ in $\g$ such that the above decomposition is orthogonal. Since $G$ is nilpotent, the exponential map $\exp: \g\rightarrow G$ is an analytic diffeomorphism. We therefore identify the elements of $G$ with those of $\g$ via the exponential map. We denote the element $x=\exp(V+Z)\in G$ by $(V,Z)$, where $V\in \g_1$, $Z\in\g_2$. By the  Baker--Campbell--Hausdorff formula, the product law in $G$ is given by
$$(V,Z)(V',Z')=\left(V+V', Z+Z'+\frac12[V,V']\right),\:\:V,V'\in \g_1;\:Z,Z'\in \g_2.$$

We denote by $dV$ and $dZ$ the Lebesgue measure on $\g_1$ and $\g_2$, respectively. Then $dx=dVdZ$ is a Haar measure on $G$.
The Lie algebra $\mathfrak{g}$ is equipped with a canonical family of dilations $\{\delta_r\}_{r>0}$ which are Lie algebra automophisms defined by \cite[p. 5]{FS}
\begin{equation*}
\delta_r\left(V,Z\right)=(rV,r^2Z),\:\:\:V\in\g_1,\:Z\in\g_2.
\end{equation*}
The dilations \( \delta_r \) lift via the exponential map to define a one-parameter group of automorphisms of \( G \), which we still denote by \( \delta_r \), making \( G \) a homogeneous group. We denote by 
\begin{equation*}
Q=\dim \g_1+2\dim \g_2
\end{equation*}
the homogeneous dimension of $G$ and by $e$ the identity element of $G$. The importance of homogeneous dimension stems from the following relation 
\begin{equation}\label{homdim}
\int_Gf(\delta_r(x))\:dx=r^{-Q}\int_Gf(x)\:dx,\quad \text{for all}\:f\in L^1(G),\:\text{and}\:\:r>0.
\end{equation}
 A homogeneous norm on $G$ is a continuous function $|\cdot|:G\to[0,\infty)$ satisfying the following:
\begin{enumerate}
	\item[i)]$|\cdot|$ is smooth on $G\setminus\{e\}$;
	\item[ii)] $|\delta_r(x)|=r|x|$, for all $r>0,\:x\in G$;
	\item [iii)]$|x^{-1}|=|x|$, for all $x\in G$;
	\item[iv)]$|x|=0$ if and only if $x=e$.
\end{enumerate}
It is known that homogeneous norms always exist on homogeneous groups \cite[p. 8]{FS}. It is also known that for any homogeneous norm $|\cdot|$ on $G$ there exists a constant $C>0$ such that
\begin{equation*}
|xy|\leq C(|x|+|y|),\;\:\:\:x\in G, \:y\in G
\end{equation*}
(see \cite[Proposition 1.6]{FS}). Moreover, any two homogeneous norms on $G$ are equivalent: if $|\cdot|_1$ and $|\cdot|_2$ are two homogeneous norms on $G$ then there exists a constant $C>0$ such that 
\begin{equation*}
C^{-1}|x|_1\leq |x|_2\leq C|x|_1,\:\:\:\text{for all}\:\:x\in G.
\end{equation*}
From now onwards, we shall work with a fixed homogeneous norm $|\cdot|$ on $G$. We will need the following formula for integration in ``polar coordinates" \cite[Proposition 1.15]{FS}: for all $f\in L^1(G)$,
\begin{equation}\label{polarcordinate}
\int_Gf(x)\:dx=\int_{0}^{\infty}\int_{S}f(\delta_r(\omega))r^{Q-1}\:d\sigma(\omega)\:dr,
\end{equation}
where $S=\{\omega\in G:|\omega|=1\}$ and $\sigma$ is a unique positive Radon measure on $S$ such that $\sigma(S)=1$. We denote by $\mathcal{S}(G)$ the Schwartz space of $G$, that is, the space of functions $f$ on $G$ such that $f\circ\exp$ is in the Schwartz space of the Euclidean space $\mathfrak{g}$.

\subsection{Harmonic analysis on {M\'etivier} groups} In this subsection, we will describe the representation theory of {M\'etivier}  groups, mostly gathered from \cite{CRS,BFG, BKS} (see also \cite{CG, R}). Let $G$ be a connected, simply connected, two-step nilpotent Lie group and $\g$, $\g_1$, $\g_2$ be as defined in the previous subsection. Let $\g^*,\g_1^*$, and $\g_2^*$ denote the dual vactor spaces of $\g$, $\g_1$, and $\g_2$ respectively. For $\lambda\in \g_2^*$, let $B_{\lambda}$ stand for the skew-symmetric bilinear form on $\g_1$ defined by 
$$B_{\lambda} (V,V')=\lambda([V,V']),\quad (V,V'\in \g_1).$$
\begin{defn}[\cite{Me}]
A connected, simply connected, two-step nilpotent Lie group $G$ is said to be a M\'etivier group if the bilinear form $B_{\la}$ is non-degenerate for all $\la\in\g_2^*\setminus\{0\}$.
\end{defn}
Since for each $\la \in \g_2^*\setminus\{0\}$, $B_\la$ is a skew-symmetric, non-degenerate bilinear form on $\g_1$, it follows that $\dim \g_1 = 2n$ for some $n \in \mathbb{N}$. We choose a basis $\{V_1, \cdots, V_{2n}\}$ of $\g_1$ and a basis $\{T_1, \cdots, T_k\}$ of $\g_2$. Let $\langle\cdot,\cdot\rangle$ be an inner product rendering $\{V_1, \cdots, V_{2n},T_1, \cdots, T_k\}$ an orthonormal basis of $\g$. The inner product $\langle\cdot,\cdot\rangle$ induces a norm on the dual $\g_2^*$, which we denote by $\|\cdot\|$. Let $J_{\lambda}$ be the skew-symmetric endomorphism such that $$B_{\lambda}(V,V')=\langle J_{\lambda}
V,V'\rangle,\quad V,V'\in\g_1.$$ 
Then $G$ is a Métivier group if and only if $J_{\lambda}$ is invertible for all $\la \in\g_2^*\setminus\{0\}$. We call $G$ a Heisenberg type or $H$-type group if $$J_{\lambda}^2=-\|\lambda\|^2 \text{Id}_{\g_1},\quad\text{for all $\lambda\in \g_2^*\setminus\{0\}$}.$$
The family of Heisenberg-type groups forms a proper subclass of Métivier groups; an explicit example of a Métivier group that is not of Heisenberg type can be found, for example, in \cite[Appendix]{MS}. In the following, we describe the basic representation theory of Métivier groups, viewed as a special class of two-step nilpotent Lie groups. For the representation theory of general two-step nilpotent Lie groups, we refer the reader to \cite{CG, CRS, R}.

There exists a Zariski open subset $\Lambda$ of $\g_2^*\setminus\{0\}$ such that for each $\lambda\in \Lambda$ there exists an orthonormal basis $\{P_1(\lambda),\cdots,P_n(\lambda),Q_1(\lambda),\cdots,Q_n(\lambda)\}$ of $\g_1$ and positive numbers $\eta_1(\lambda),\cdots,\eta_n(\lambda)$ satisfying (see \cite[Proposition 3.1]{Ni})
$$\langle J_{\lambda}P_i(\lambda),Q_j(\lambda)\rangle=\delta_{ij}\eta_j(\lambda),\quad 1\leq i,j\leq n.$$
Here, for each $1\leq j\leq n$, the function $\lambda\mapsto \eta_j(\lambda)$ is  homogeneous of degree 1, and continuous on $\g_2^*$, and real analytic on $\Lambda.$ Moreover, each $\eta_j(\la)>0$ for all $\la\in \Lambda$, and $\eta_j\neq \eta_i$ whenever $i\neq j$ for all $\lambda\in \Lambda.$


We note that $\Lambda$ can be taken as a set of full measure in $\R^k$, where $k$ is the dimension of $\g_2$ (see \cite{CRS}). Thus, the homogeneous dimension of $G$ is $Q=2n+2k$.

We now fix $\la \in \Lambda$, and define the following subspaces of $\g_1$ given by
\begin{align*}
    &\p_\la  =\text{span}_{\R}\{P_1(\la),\cdots,P_n(\la)\}, \\
&\q_\la = \text{span}_\R\{Q_1(\la),\cdots,Q_n(\la)\}.
\end{align*}
This gives rise to the following decomposition
$$\g = \g_1 \oplus \g_2 = \p_\la \oplus \q_\la \oplus \mathfrak g_2,$$
and so any element $V\in\g_1$ can be written as
$$V=P(\la)+Q(\la),\:\:\:\text{where}\:P(\la)\in\p_{\la},\:Q(\la)\in\q_{\la}.$$
With respect to the above decomposition, we write any element $x\in G$, as $x=\exp (X(\la,x))$, where $X(\la,x)=(P(\la),Q(\la),T)\in\g$.  More precisely, we identify $x$ with $$(p(\la),q(\la),t):=(p_1(\la),\cdots,p_n(\la),q_1(\la),\cdots,q_n(\la),t_1,\cdots,t_k)\in\R^{2n+k},$$ where
$$P(\la)=\sum_{j=1}^{n}{p_j(\la)P_j(\la)},\:Q(\la)=\sum_{j=1}^{n}{q_j(\la)Q_j(\la)},\:T=\sum_{j=1}^{k}{t_jT_j}.$$
The basis $\{P_1(\la), Q_1(\la), \cdots, P_n(\la), Q_n(\la), T_1, \cdots, T_k \}$ of $\g$ is called an almost symplectic basis. Let $\{T_1^*,\cdots,T_k^*\}$ denote the dual basis in $\g_2^*$.

For $\la\in\Lambda$, we consider the following irreducible unitary representation $\pi_{\la}$ of $G$ realized on $L^2(\p_\la)$ by the following action (See \cite[p. 2693]{BKS}):
\begin{equation}\label{represent}
    \pi_{\la}(x)\phi(\xi)=e^{i\sum_{j=1}^k\la_jt_j+i\sum_{j=1}^n\eta_j(\la)\left(p_j(\la)\xi_j(\la)+\frac{1}{2}p_j(\la)q_j(\la)\right)}\phi(\xi(\la)+q(\la)),
\end{equation}
where $x=(p(\la),q(\la),t)\in G$, $\phi \in L^2(\p_\la)$, $\la=\sum_{j=1}^n\la_jT_j^*$, and $\xi(\la)=\sum_{j=1}^n\xi_j(\la)Q_j(\la)$. There are other irreducible unitary representations of $G$, which do not play any role in the Plancherel formula \cite{CG, R}. To simplify the notation, we will omit the dependence on $\la$ whenever it is clear from the context.

We define the Fourier transform of $f\in L^1(G)$ by the operator-valued integral
$$\F(f)(\la)=\int_Gf(x)\pi_{\la}(x)\:dx,\:\:\:\la\in\Lambda.$$
We note that $\F(f)(\la)$ is a bounded linear operator on $L^2(\p_\la)$ with 
\begin{equation}\label{l1linf}
  \|\F(f)(\la)\|_{\text{op}}\leq \|f\|_{L^1(G)},\:\:\:\text{for all}\:\la\in\Lambda.  
\end{equation}

It is known that if $f\in L^1(G)\cap L^2(G)$, then $\F(f)(\la)$ is a Hilbert--Schmidt operator.   We also have the following Plancherel formula \cite{CRS} (see also \cite{R}): 
\begin{equation}\label{plan}
    \int_{G}|f(x)|^2\:dx=C\int_{\Lambda}\|\F(f)(\la)\|_{S_2(L^2(\mathfrak{p}_{\la}))}^2|\pf(\la)|\:d\la,
\end{equation}
where $|\pf(\la)|=\prod_{j=1}^n\eta_j(\la)$ is the Pfaffian of $B_{\la}$, and $d\la$ is the Lebesgue measure on $\mathfrak{g}_2^*\simeq \R^k$. In the formula above, $\|\cdot\|_{S_p(\mathcal{H})}$, $p\in[1,\infty]$, denotes the Schatten $p$-norm on a separable Hilbert space $\mathcal{H}$. We refer the reader to the Appendix \ref{schatt} for more details on Schatten class operators. The formula \eqref{plan} extends the definition of the Fourier transform to all $f \in L^2(G)$; the Fourier transform thus defined will verify the equality of norms described above.

In view of \eqref{l1linf} and \eqref{plan}, applying the noncommutative Riesz--Thorin interpolation, one can obtain the following analogue of Hausdroff--Young inequality \cite{Kunz}: for $p\in(1,2)$, we have
\begin{equation}\label{hyinq}
    \left(\int_{\Lambda}\|\F(f)(\la)\|_{S_{p'}(L^2(\mathfrak{p}_{\la}))}^{p'}|\pf(\la)|\:d\la\right)^{\frac{1}{p'}}\leq C_p\|f\|_{L^p(G)}.
\end{equation}
The inversion formula for the Fourier transform on $G$ reads as follows \cite[Proposition 1.1]{BFG}: there exists a constant $\kappa>0$ such that for all Schwartz class functions $f$ in $G$ 
\begin{equation}\label{inver}
    f(x)=\kappa \int_{\Lambda}tr\left(\pi_{\la}(x)^*\F(f)(\la)\right)|\pf(\la)|\:d\la,\:\:\:\text{for all}\:x\in G.
\end{equation}
We end this subsection by recording an important property of the group Fourier transform. For $r>0$, we define the standard dilation operator $d_r$  on $L^2(\p_{\la})$ by
$$d_r\phi(\xi)=\phi(r\xi),\:\:\:\xi\in\p_{\la},\:\phi\in L^2(\p_{\la}).$$
For a function $f$ on $G$, we define $\delta_rf(x)=f(\delta_r(x))$, $x\in G$.
\begin{lem}
\label{dila-FT}
 Let $f\in L^1(G)$. Then    
	\begin{equation*}
			\F(\delta_rf)(\lambda) = r^{-Q}  d_r \circ \F(f)(r^{-2} \lambda) \circ d_r^{-1},\:\:\:\la\in\Lambda.
\end{equation*} 
\end{lem}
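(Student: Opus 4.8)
The plan is to compute the group Fourier transform of $\delta_r f$ directly from the defining integral, push the dilation through the change of variables on $G$, and then relate the resulting integral to $\F(f)$ evaluated at a rescaled spectral parameter. Concretely, I would start with
\[
\F(\delta_r f)(\la)=\int_G f(\delta_r(x))\,\pi_\la(x)\,dx,
\]
and apply the substitution $y=\delta_r(x)$, which by \eqref{homdim} contributes a Jacobian factor $r^{-Q}$, turning the integral into $r^{-Q}\int_G f(y)\,\pi_\la(\delta_{r^{-1}}(y))\,dy$. The crux is then to understand how $\pi_\la\circ\delta_{r^{-1}}$ compares with $\pi_{r^{-2}\la}$ up to conjugation by the unitary dilation $d_r$ on $L^2(\p_\la)$.

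The main step, therefore, is the identity
\[
\pi_\la(\delta_{r^{-1}}(y)) \;=\; d_r\circ \pi_{r^{-2}\la}(y)\circ d_r^{-1},
\qquad y\in G,
\]
which I would verify by plugging into the explicit formula \eqref{represent}. Writing $y=(p(\la),q(\la),t)$ and noting $\delta_{r^{-1}}(y)=(r^{-1}p,r^{-1}q,r^{-2}t)$, the exponential phase in \eqref{represent} becomes
\[
e^{\,i\sum_j (r^{-2}\la_j) t_j \;+\; i\sum_j \eta_j(\la)\big(r^{-1}p_j\,\xi_j + \tfrac12 r^{-2}p_j q_j\big)},
\]
and, using the degree-one homogeneity $\eta_j(r^{-2}\la)=r^{-2}\eta_j(\la)$, one recognizes this — after the substitution $\xi\mapsto r\xi$ supplied by $d_r$ on the argument, together with the shift $\xi\mapsto \xi+q$ in \eqref{represent} — as exactly the phase and translation defining $\pi_{r^{-2}\la}(y)$ acting on $L^2(\p_{r^{-2}\la})$, transported back to $L^2(\p_\la)$ via $d_r$. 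Here one uses that $\p_{r^{-2}\la}=\p_\la$ as subspaces of $\g_1$ (the almost-symplectic splitting depends only on the line through $\la$), so $d_r$ genuinely intertwines the two function spaces. Combining this with the $r^{-Q}$ Jacobian yields
\[
\F(\delta_r f)(\la)=r^{-Q}\,d_r\circ\Big(\int_G f(y)\,\pi_{r^{-2}\la}(y)\,dy\Big)\circ d_r^{-1}
= r^{-Q}\, d_r\circ \F(f)(r^{-2}\la)\circ d_r^{-1}.
\]

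The anticipated obstacle is purely bookkeeping rather than conceptual: one must be careful that the coordinates $(p(\la),q(\la),t)$ used to express $y$ are tied to the basis $\{P_j(\la),Q_j(\la),T_j\}$, which varies with $\la$, and check that replacing $\la$ by $r^{-2}\la$ does not change these basis vectors (only rescales the $\eta_j$'s). Once that compatibility is pinned down, tracking the factors of $r$ in the phase and in the translation argument against the homogeneity degrees ($\eta_j$ degree $1$, the central variable $t$ scaling by $r^2$, the $\g_1$-variables by $r$) gives the claimed formula. An alternative, slightly cleaner route avoiding \eqref{represent} entirely is to observe that $\delta_r$ is an automorphism of $G$, so $x\mapsto \pi_\la(\delta_{r^{-1}}(x))$ is again an irreducible unitary representation with the same central character data as $\pi_{r^{-2}\la}$, hence unitarily equivalent to it by the Stone–von Neumann uniqueness for the relevant quotient Heisenberg group, and then identify the intertwiner as $d_r$ by testing on a single generator; but the direct computation with \eqref{represent} is more transparent and self-contained, so that is the route I would present.
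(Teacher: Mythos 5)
Your proposal is correct and follows essentially the same route as the paper: a change of variables producing the Jacobian $r^{-Q}$, combined with the intertwining identity $\pi_\la\circ\delta_{r^{-1}}=d_r\circ\pi_{r^{-2}\la}\circ d_r^{-1}$ verified directly from \eqref{represent} and the degree-one homogeneity of the $\eta_j$ (the paper states this identity only for points $(p,q,0)$ and handles the central phase separately via a partial Fourier transform in $t$, but that is a purely organizational difference). Your remark that the almost-symplectic basis, and hence $\p_\la$, is unchanged under the positive rescaling $\la\mapsto r^{-2}\la$ correctly addresses the only subtlety.
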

 \begin{proof}
We first observe that 
	\begin{equation}
		\label{pi-r}
		\pi_{\lambda}(rp,rq,0)=d_r^{-1}\circ\pi_{r^2\lambda }(p,q,0)\circ d_r,\:\:\text{for all}\:p\in\p_{\la},\:q\in\q_{\la},\:r>0,
	\end{equation} which can be easily checked using the definition of $\pi_{\lambda}$ \eqref{represent}, and the fact that each $\eta_j$ is homogeneous of degree one. Now a simple change of variable yields 
\begin{align}\label{ftofdil}
    \F(\delta_rf)(\lambda)&=\int_{G}f(rp,rq,r^2t)\pi_{\lambda}(p,q,t)\:dp\:dq\:dt\nonumber\\
    &=r^{-Q}\int_{p_{\la}}\int_{q_{\la}}\int_{\g_2}f(p,q,t)\pi_{\lambda}(r^{-1}p,r^{-1}q,r^{-2}t)\:dp\:dq\:dt.
\end{align}
For $\mu \in \Lambda$, we define $f^{\mu}$ as the Euclidean (inverse) Fourier transform of $f$ with respect to the central variable evaluated at $\mu$, i.e.,

$$f^{\mu}(p,q)=\int_{\g_2}f(p,q,t)e^{i\mu(t)}\:dt,\:\:(p,q)\in\p_{\mu}\oplus\q_{\mu}.$$
Thus, we can rewrite \eqref{ftofdil} as follows:
\begin{align*}
  \F(\delta_rf)(\lambda)&=r^{-Q}\int_{p_{\la}}\int_{q_{\la}}\int_{\g_2}f(p,q,t)e^{i\lambda(r^{-2}t)}\:dt\:\pi_{\lambda}(r^{-1}p,r^{-1}q,0)\:dq\:dp \\
  &=r^{-Q}\int_{p_{\la}}\int_{q_{\la}}f^{\lambda/r^2}(p,q)\pi_{\lambda}(r^{-1}p,r^{-1}q,0)\:dq\:dp\\
  &=r^{-Q}d_r\circ\left(\int_{p_{\la}}\int_{q_{\la}}f^{r^{-2}\lambda}(p,q)\pi_{r^{-2}\lambda}(p,q,0)\:dq\:dp\right)\circ d_{r^{-1}},
\end{align*}
where we have used \eqref{pi-r} in the last step. The lemma then follows immediately from the definition of $f^{r^{-2}\lambda}$.
 \end{proof}
\subsection{The sub-Laplacian}
We may consider an element $X$ of $\mathfrak{g}$ as a left-invariant differential operator acting on $C^\infty(G)$, where the action is given by
$$X(f)(y) = \left. \frac{d}{dt} f(y \exp (tX)) \right|_{t=0}\quad\quad (y\in G,\:f\in C^\infty(G)).$$
Recall that $\{V_1, V_2,\hdots, V_{2n}\}$ is a basis of $\g_1.$ The sub-Laplacian on $G$ is defined by 
$$\mathcal{L}=-\sum_{j=1}^{2n} V_j^2.$$
It is a positive self-adjoint operator that is homogeneous of degree 2 with respect to dilations, meaning that
 $$\delta_r^{-1}\circ\mathcal{L}\circ\delta_{r}=r^2 \mathcal{L},\quad\text{for all}\:r>0.$$
In order to describe the spectral theory of $\mathcal{L}$,  let us begin with recalling the orthonormal basis of $L^2(\R)$ consisting of one dimensional Hermite functions $\{\varphi_m\}_{m\in\N}$ satisfying 
$$\varphi_m^{\prime\prime}(\tau)-\tau^2\varphi_m(\tau)=-(2m+1)\varphi_m(\tau),\:\:\:\text{for all}\:\tau\in\R.$$
For a fixed $\la\in \Lambda$, we define generalised scaled Hermite functions as follows: 
$$\Phi_{\alpha}^{\eta(\la)}(\xi):= \prod_{j=1}^n\varphi_{\alpha_j,\eta_j(\lambda)}(\xi_j)\quad (\xi=(\xi_1,\xi_2,\cdots,\xi_n)\in \mathbb{R}^n,~\alpha\in \mathbb{N}^n),$$
where 
$$\varphi_{m,\beta}(\tau)=\beta^{\frac{1}{4}}\varphi_m(\beta^{\frac{1}{2}}\tau),\:\:\:(m,\beta)\in\N\times\R^+,\:\tau\in\R.$$
It is known that, for each $\la\in \Lambda$, $\{\Phi_{\alpha}^{\eta(\la)}:\alpha\in\N^n\}$ forms an orthonormal basis of $L^2(\p_{\la})$. The sub-Laplacian $\ml$ satisfies \cite[p. 305]{R}  $$\widehat{\mathcal{L}f}(\lambda)=\widehat{f}(\lambda) H(\eta(\la)),\:\:\:\la\in \Lambda,$$
where $f$ is a smooth function on $G$ and $$H(\eta(\la)):=\sum_{j=1}^n\left(-\frac{\partial^2}{\partial\xi_j^2}+\eta_j(\la)^2\xi_j^2\right),\:\:\:\eta(\la)=(\eta_1(\la),\cdots,\eta_n(\la)).$$ The operator $H(\eta(\la))$ is called the generalized scaled Hermite operator with parameter $\eta(\la)$ whose spectral decomposition is given by the following formula
\begin{equation}\label{scaledher}
    H(\eta(\la))\Phi_{\alpha}^{\eta(\la)}=\sum_{j=1}^n(2\alpha_j+1)\eta_j(\la)\Phi_{\alpha}^{\eta(\la)}.
\end{equation}
We set $$\zeta_j(\alpha,\lambda)=(2\alpha_j+1)\eta_j(\la),\:\:\:(\alpha,\la)\in\N^d\times\Lambda,$$
and the frequencies associated with $H(\eta(\la))$ is defined by
$$\zeta(\alpha,\la)=\sum_{j=1}^n\zeta_j(\alpha,\lambda),\:\:\:(\alpha,\la)\in\N^d\times\Lambda.$$
Thus, for $\psi\in L^2(\p_{\la})$, we have 
\begin{equation}\label{spec-hermite}
    H(\eta(\la)\psi=\sum_{\alpha\in\N^n}\zeta(\alpha,\la)\langle \psi,\Phi_{\alpha}^{\eta(\la)}\rangle \Phi_{\alpha}^{\eta(\la)},
\end{equation}
where the sum on the right-hand side converges in the $L^2$ norm, and 
$$\langle \psi,\Phi_{\alpha}^{\eta(\la)}\rangle=\int_{\p_{\la}}\psi(\xi)\Phi_{\alpha}^{\eta(\la)}(\xi)\:d\xi.$$
As $G$ is M\'etivier, we have $\eta_j(\la)\neq 0$ for all $\la\in \g_2^*\setminus\{0\}$ (see\cite[p. 9]{Ni}). Using the continuity of $\eta_j$, we have the estimate 
$$\eta_{j}(\la)\asymp 1\quad\text{for}~\|\la\|=1\quad\quad (1\leq j\leq n).$$ 
But each $\eta_j$ is homogeneous of degree one. We thus obtain  $$\eta_{j}(\la)\asymp\|\la\|\quad\quad (\lambda\in\g_2^*\setminus\{0\},~1\leq j\leq n).$$ Therefore, recalling the expression for the Plancherel density $|\pf(\la)|=\prod_{j=1}^n\eta_j(\la)$, we have \begin{equation}\label{pfest}
    |\pf(\la)|\asymp\|\la\|^n\quad\quad (\la\in \Lambda).
\end{equation}
Furthermore, this observation, in view of the definition of $\zeta(\alpha,\la)$, allows us to get the following estimate: \begin{equation}
\label{eigenvalueest}
    \zeta(\alpha,\la)\asymp (|\alpha|+n)\|\la\|\quad\quad ((\alpha,\la)\in\N^d\times\Lambda).
\end{equation}

\section{Proof of main results}
This section is dedicated to proving our main result, Theorem \ref{main result}. Throughout this section, unless stated otherwise, all implicit and explicit constants depend only on $\beta, \gamma, n, k$. The proof is divided into two cases: $p = 1$, and $1 < p < 2$. For the reader’s convenience, we present each case as a separate theorem. We begin with the $p = 1$ case.
\subsection{Proof for $p=1$ case}
\begin{thm}\label{l1case}
Suppose that $\gamma>0$ and $\beta>\frac12Q$. Then, for all $f\in L^1(G)$, we have 
$$\|f\|^{\gamma+\beta}_1\lesssim\left(\int_{G}|x|^\gamma |f(x)|dx\right)^\beta~\left( \sup_{\lambda\in \Lambda} \|\mathcal{F}(f)(\lambda) H(\eta(\lambda))^{\frac\beta2}\|_{\text{op}}\right)^\gamma.$$
\end{thm}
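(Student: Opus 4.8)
The plan is to prove the inequality by a scaling argument, exactly as in the classical Heisenberg--Pauli--Weyl proofs: introduce the dilated function $\delta_r f$, split $f$ as an $L^1$ function into a piece supported near the identity and a piece away from it, control the near-identity piece by the Fourier side and the away piece by the weighted $L^1$ norm, and then optimize over the two parameters $r>0$ and the cutoff radius. Concretely, I would first establish a ``uniform pointwise bound'' for Schwartz functions of the form
\begin{equation*}
\|f\|_\infty \lesssim \sup_{\lambda\in\Lambda}\|\F(f)(\lambda)H(\eta(\lambda))^{\beta/2}\|_{\mathrm{op}}
\end{equation*}
whenever $\beta>\tfrac12 Q$, which would follow from the inversion formula \eqref{inver}: write $f(x)=\kappa\int_\Lambda \mathrm{tr}\big(\pi_\lambda(x)^*\,\F(f)(\lambda)H(\eta(\lambda))^{\beta/2}\,H(\eta(\lambda))^{-\beta/2}\big)|\pf(\lambda)|\,d\lambda$, estimate the trace by $\|\F(f)(\lambda)H(\eta(\lambda))^{\beta/2}\|_{\mathrm{op}}$ times the trace-norm $\|H(\eta(\lambda))^{-\beta/2}\|_{S_1}$, and check using \eqref{eigenvalueest} that $\|H(\eta(\lambda))^{-\beta/2}\|_{S_1}=\sum_{\alpha\in\N^n}\zeta(\alpha,\lambda)^{-\beta/2}\asymp \|\lambda\|^{-\beta/2}\sum_\alpha (|\alpha|+n)^{-\beta/2}$, where the $\alpha$-sum converges precisely when $\beta/2>n$, i.e. $\beta>2n$; combined with \eqref{pfest} this gives $\int_\Lambda\|H(\eta(\lambda))^{-\beta/2}\|_{S_1}|\pf(\lambda)|\,d\lambda\asymp\int_{\R^k}\|\lambda\|^{n-\beta/2}\,d\lambda<\infty$ exactly when $n-\beta/2<-k$, i.e. $\beta>2n+2k=Q$. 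Wait --- this needs $\beta>Q$, not $\beta>\tfrac12 Q$; so instead I would interpolate: the $L^\infty$ bound is not what we want directly. Let me restructure.

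The correct scheme is the one-parameter dilation trick applied to the three-way comparison of $\|f\|_1$, the weighted norm, and the Fourier norm. I would argue: for any $r>0$, split $f=f\mathbf{1}_{\{|x|\le r\}}+f\mathbf{1}_{\{|x|>r\}}$. On $\{|x|>r\}$ we trivially have $\|f\mathbf{1}_{\{|x|>r\}}\|_1\le r^{-\gamma}\int_G|x|^\gamma|f(x)|\,dx$. On $\{|x|\le r\}$, I want $\|f\mathbf{1}_{\{|x|\le r\}}\|_1\lesssim r^{Q/2}\cdot(\text{something involving the Fourier side})$ --- this is where the homogeneous dimension and the exponent $\beta>\tfrac12 Q$ enter. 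The natural route: $\|f\mathbf{1}_{\{|x|\le r\}}\|_1 \le (\text{measure of ball}\;{\asymp}\;r^Q)^{1/2}\,\|f\|_{L^2(\{|x|\le r\})}\le r^{Q/2}\|f\|_{L^2(G)}$ by Cauchy--Schwarz, and then bound $\|f\|_{L^2(G)}$ by the Plancherel formula \eqref{plan} against $\sup_\lambda\|\F(f)(\lambda)H(\eta(\lambda))^{\beta/2}\|_{\mathrm{op}}$: indeed $\|\F(f)(\lambda)\|_{S_2}\le\|\F(f)(\lambda)H(\eta(\lambda))^{\beta/2}\|_{\mathrm{op}}\|H(\eta(\lambda))^{-\beta/2}\|_{S_2}$, and $\|H(\eta(\lambda))^{-\beta/2}\|_{S_2}^2=\sum_\alpha\zeta(\alpha,\lambda)^{-\beta}\asymp\|\lambda\|^{-\beta}\sum_\alpha(|\alpha|+n)^{-\beta}$ converges iff $\beta>n$, and then $\int_\Lambda\|H(\eta(\lambda))^{-\beta/2}\|_{S_2}^2|\pf(\lambda)|\,d\lambda\asymp\int\|\lambda\|^{n-\beta}\,d\lambda$ converges iff $\beta>n+k=Q/2$. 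This is exactly the hypothesis $\beta>\tfrac12 Q$. So one gets $\|f\|_1\lesssim r^{-\gamma}A+r^{Q/2}B$ with $A=\int_G|x|^\gamma|f(x)|\,dx$ and $B=\sup_\lambda\|\F(f)(\lambda)H(\eta(\lambda))^{\beta/2}\|_{\mathrm{op}}$ --- but the powers $r^{-\gamma}$ and $r^{Q/2}$ do not scale consistently to give the claimed homogeneous inequality unless I replace $f$ by $\delta_s f$ first and also let $r$ vary, effectively using two parameters; optimizing $\min_{r>0}(r^{-\gamma}A+r^{Q/2}B)$ already gives $\|f\|_1\lesssim A^{\frac{Q/2}{\gamma+Q/2}}B^{\frac{\gamma}{\gamma+Q/2}}$, which is the wrong exponent pairing. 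The resolution is that the Plancherel/Hausdorff--Young step should be run on $\delta_r f$ using Lemma~\ref{dila-FT}, so that the $r$-powers on the two sides rebalance to $\beta$ and $\gamma$ as in the statement; I would track the homogeneity carefully: $\||\cdot|^\gamma\delta_r f\|_1 = r^{-Q-\gamma}\||\cdot|^\gamma f\|_1$ after change of variables, while $\sup_\lambda\|\F(\delta_r f)(\lambda)H(\eta(\lambda))^{\beta/2}\|_{\mathrm{op}}$ picks up, via Lemma~\ref{dila-FT} and the homogeneity of $H$, a factor $r^{-Q}r^{-\beta}$ (the $r^{-\beta}$ from rescaling $H(\eta(r^{-2}\lambda))^{\beta/2}=r^{-\beta}H(\eta(\lambda))^{\beta/2}$ and $d_r$-conjugation), and $\|\delta_r f\|_1=r^{-Q}\|f\|_1$; substituting these scalings into the $r$-free inequality $\|g\|_1\lesssim A(g)^{\theta}B(g)^{1-\theta}$ for the particular exponent it produces, and then choosing $r$ to make the two sides homogeneous of the same degree, forces the exponents $\beta/(\gamma+\beta)$ and $\gamma/(\gamma+\beta)$ to appear --- this dimensional-analysis bookkeeping is the routine but delicate heart of the argument.

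Assembling: the steps in order are (i) prove the auxiliary bound $\|f\|_1\lesssim |B_r|^{1/2}\,\|f\|_{L^2(G)}+r^{-\gamma}\||\cdot|^\gamma f\|_1$ via Cauchy--Schwarz on the ball of radius $r$ and its complement, using $|B_r|\asymp r^Q$ from \eqref{polarcordinate}; (ii) bound $\|f\|_{L^2(G)}$ by $c_\beta\,\sup_{\lambda}\|\F(f)(\lambda)H(\eta(\lambda))^{\beta/2}\|_{\mathrm{op}}$ through Plancherel \eqref{plan}, the operator inequality $\|ST\|_{S_2}\le\|S\|_{\mathrm{op}}\|T\|_{S_2}$, and the explicit Hilbert--Schmidt computation of $\|H(\eta(\lambda))^{-\beta/2}\|_{S_2}$ using \eqref{scaledher}, \eqref{eigenvalueest}, \eqref{pfest}, where convergence of both the $\alpha$-sum and the $\lambda$-integral is governed by $\beta>\tfrac12 Q$; (iii) replace $f$ by $\delta_r f$, insert the scaling relations for all three quantities (from \eqref{homdim}, the change of variables for the weighted norm, and Lemma~\ref{dila-FT} plus homogeneity of $\eta_j$), and optimize over the free dilation parameter, which converts the inequality into the claimed product form with exponents $\beta/(\gamma+\beta)$ and $\gamma/(\gamma+\beta)$. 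The main obstacle I anticipate is step (ii): one must verify that passing a power of $H(\eta(\lambda))^{-\beta/2}$ through the Hilbert--Schmidt norm, summing the resulting eigenvalue series $\sum_\alpha((|\alpha|+n)\|\lambda\|)^{-\beta}$ (a series over $\N^n$, so its convergence needs $\beta>n$, not merely $\beta>1$), and then integrating $\|\lambda\|^{n-\beta}$ over $\R^k\cong\g_2^*$ (needing $\beta>n+k=Q/2$) together give the finiteness and the correct dependence; one also must be slightly careful that the bound is uniform in $f$ and that $\F(f)(\lambda)H(\eta(\lambda))^{\beta/2}$ is a well-defined bounded operator, which is where one quietly restricts to a dense class (Schwartz functions) and then extends by density using the lower semicontinuity of the norms involved.
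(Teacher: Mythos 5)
There is a genuine gap, and it sits at the heart of your step (ii). The inequality $\|f\|_{2}\lesssim \sup_{\la\in\Lambda}\|\F(f)(\la)H(\eta(\la))^{\beta/2}\|_{\mathrm{op}}$ is false. Your own computation shows why: the constant you would need is
$\bigl(\int_{\Lambda}\|H(\eta(\la))^{-\beta/2}\|_{S_2}^{2}\,|\pf(\la)|\,d\la\bigr)^{1/2}\asymp\bigl(\int_{\R^k}\|\la\|^{\,n-\beta}\,d\la\bigr)^{1/2}$,
and the integral $\int_{\R^k}\|\la\|^{n-\beta}d\la$ never converges: it requires $\beta>n+k$ at infinity but $\beta<n+k$ at the origin. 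You checked only the behaviour at infinity and missed the divergence at $\la=0$. The failure is also forced by homogeneity: under $f\mapsto f\circ\delta_{1/r}$ one has $\|f\|_2\mapsto r^{Q/2}\|f\|_2$ while $\sup_\la\|\F(f)(\la)H(\eta(\la))^{\beta/2}\|_{\mathrm{op}}\mapsto r^{Q-\beta}(\cdots)$, and $Q/2\neq Q-\beta$ since $\beta\neq Q/2$; so no such one-term bound can hold. This is the same tension you notice later when the exponents come out as $Q/2$ and $\gamma$ instead of $\beta$ and $\gamma$ — but that mismatch is not a bookkeeping issue to be repaired by a second dilation parameter; it is the signature of a false intermediate inequality.

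The missing idea (and the crux of the paper's argument) is a \emph{frequency splitting} of the Plancherel integral at a threshold $c>0$: on the set $\{(\alpha,\la):\zeta(\alpha,\la)\le c\}$ one must use the trivial bound $\|\F(f)(\la)\Phi^{\eta(\la)}_{\alpha}\|\le\|\F(f)(\la)\|_{\mathrm{op}}\le\|f\|_1$ together with $\int_\Lambda\sum_{\zeta(\alpha,\la)\le c}|\pf(\la)|\,d\la\lesssim c^{\,n+k}$, and only on $\{\zeta(\alpha,\la)>c\}$ may one insert $H(\eta(\la))^{-\beta/2}$, where the relevant integral $\int_\Lambda\sum_{\zeta(\alpha,\la)>c}\zeta(\alpha,\la)^{-\beta}|\pf(\la)|\,d\la\lesssim c^{\,n+k-\beta}$ converges precisely because each $\la$-integration is restricted to $\|\la\|\gtrsim c/(|\alpha|+n)$, away from the origin. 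This yields the two-term bound $\|f\|_2^2\lesssim c^{\,n+k}\|f\|_1^2+c^{\,n+k-\beta}B^2$ with $B=\sup_\la\|\F(f)(\la)H(\eta(\la))^{\beta/2}\|_{\mathrm{op}}$, valid for all $c>0$. With this in hand your scheme does close: combine it with your step (i), choose $c\asymp r^{-2}$ so that the $r^{Q/2}c^{(n+k)/2}\|f\|_1$ term is absorbed into the left-hand side, obtain $\|f\|_1\lesssim r^{-\gamma}\||\cdot|^\gamma f\|_1+r^{\beta}B$, and optimize in $r$; the exponents $\beta/(\gamma+\beta)$ and $\gamma/(\gamma+\beta)$ then appear automatically, with no further rescaling needed. (The paper runs the logically equivalent normalized version: it fixes $\|f\|_1=\||\cdot|^\gamma f\|_1=1$ by dilation invariance and derives a uniform lower bound on $B$, using the same splitting plus a final restriction to an annulus $c_2\le\zeta(\alpha,\la)\le c_1$.) As written, however, your proof rests on a lemma that cannot be true.
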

 \begin{proof}
 First, we demonstrate that the above inequality remains invariant under dilation and multiplication by a constant. In order to do so, for $r>0$, and $c>0$, we set $$h(x)=c f(\delta_{r^{-1}}(x)),\quad x\in G.$$
 The formula \eqref{homdim} shows that $\|h\|_1=cr^Q\|f\|_1$, and 
 \begin{equation}
     \label{l1dilate}
     \int_{G}|x|^\gamma |h(x)|dx=cr^{Q+\gamma}\int_{G}|x|^\gamma |f(x)|dx.
 \end{equation}
 Now, on the Fourier transform side, using Lemma \ref{dila-FT} we observe that 
 $$\mathcal{F}(h)(\lambda)=c r^Q d_{r^{-1}}\circ \mathcal{F}(f)(r^2\lambda)\circ d_{r^{-1}}^{-1},\quad \lambda\in \Lambda,$$
 which yields 
 \begin{equation}
     \label{dil-spec-hlamda}
     \|\F (h)(\lambda) H(\eta(\lambda))^{\frac\beta2}\|_{\text{op}}=cr^Q\|d_{r^{-1}}\circ \mathcal{F}(f)(r^2\lambda)\circ d_{r^{-1}}^{-1} H(\eta(\lambda))^{\frac\beta2}\|_{\text{op}}.
 \end{equation}
 In view of the spectral decomposition \eqref{spec-hermite} of $H(\eta(\lambda))$, for any $\psi\in L^2(\p_{\la})$, we see that 
 $$H(\eta(r^2\lambda))^{\frac\beta2}\psi=\sum_{\alpha\in \mathbb{N}^n}\zeta(\alpha, r^2\lambda)^{\frac\beta2} \La \psi, \Phi_{\alpha}^{\eta(r^2\lambda)}\Ra \Phi_{\alpha}^{\eta(r^2\lambda)}.$$
Now, using the homogeneity of $\eta$ along with a change of variable, we note that 
\begin{align*}
    \La \psi, \Phi_{\alpha}^{\eta(r^2\lambda)}\Ra \Phi_{\alpha}^{\eta(r^2\lambda)}&=\La \psi, \Phi_{\alpha}^{r^2\eta(\lambda)}\Ra \Phi_{\alpha}^{r^2\eta(\lambda)}\\
    &= \La d_r^{-1}\psi,\Phi_{\alpha}^{\eta(\lambda)}\Ra d_r\Phi_{\alpha}^{\eta(\lambda)}.
\end{align*}
This, together with the homogeneity of $\zeta $ shows that 
\begin{equation}
    \label{dil-Hlambda}
    H(\eta(r^2\lambda))^{\frac\beta2}=r^{\beta} d_{r}\circ H(\eta(\lambda))^{\frac\beta2}\circ d_{r}^{-1}.
\end{equation}
Therefore, plugging \eqref{dil-Hlambda} into \eqref{dil-spec-hlamda}, we obtain 
$$\|\F (h)(\lambda) H(\eta(\lambda))^{\frac\beta2}\|_{\text{op}}=cr^{Q-\beta} \|\F (f)(r^2\lambda) H(\eta(r^2\lambda))^{\frac\beta2}\|_{\text{op}}$$
which yields 
$$\sup_{\lambda\in \Lambda} \|\mathcal{F}(h)(\lambda) H(\eta(\lambda))^{\frac\beta2}\|_{\text{op}}=cr^{Q-\beta}\sup_{\lambda\in \Lambda} \|\mathcal{F}(f)(\lambda) H(\eta(\lambda))^{\frac\beta2}\|_{\text{op}}.$$
This, together with \eqref{l1dilate}, establishes the claimed invariance. 

Thus, in view of this observation, we may assume that 
 \begin{equation}
     \label{l1assmp}
     \|f\|_1=1=\int_{G}|x|^\gamma |f(x)|\:dx.
 \end{equation}
    Hence, in order to prove the theorem, it suffices to prove that 
    \begin{equation}
    \label{mainestl1}
        \sup_{\lambda\in \Lambda} \|\mathcal{F}(f)(\lambda) H(\eta(\lambda))^{\frac\beta2}\|_{\text{op}}\geq C>0.
    \end{equation}
    First, note from \eqref{l1assmp} that for any $a>0$
    \begin{align*}
      1=\int_{G}|x|^\gamma |f(x)|\:dx\geq \int_{|x|\ge a}|x|^\gamma |f(x)|\:dx\geq  a^\gamma \int_{|x|\geq a} |f(x)|\:dx. 
    \end{align*}
This implies that
   \begin{equation}
       \label{l1est1}
       \int_{|x|\leq a}|f(x)|\:dx=\|f\|_1-\int_{|x|\ge a} |f(x)|\:dx\geq 1-a^{-\gamma}.
   \end{equation}
   The constant $a$ will be specified later. 
   We now assert that $f \in L^2(G)$. To do so, we first observe that
   \begin{align}\label{planche}
       &\int_{\Lambda} \|\F (f)(\lambda)\|_{S_2(L^2(\p_{\la}))}^2~ |\pf(\lambda)|~d\lambda\nonumber\\& =\int_{\Lambda} \sum_{\alpha\in \mathbb{N}^n}\|\F (f)(\lambda) \Phi^{\eta{(\lambda)}}_{\alpha}\|_{L^2(\p_{\la})}^2 ~ |\pf(\lambda)|~d\lambda\nonumber\\
       &=\int_{\Lambda} \sum_{\alpha:\zeta(\alpha,\lambda)\leq 1}\|\F (f)(\lambda) \Phi^{\eta{(\lambda)}}_{\alpha}\|_{L^2(\p_{\la})}^2 ~ |\pf(\lambda)|~d\lambda \nonumber\\&\hspace*{2cm}+  \int_{\Lambda} \sum_{\alpha:\zeta(\alpha,\lambda)> 1}\zeta(\alpha,\lambda)^{-\beta} \zeta(\alpha,\lambda)^{\beta}\|\F (f)(\lambda) \Phi^{\eta{(\lambda)}}_{\alpha}\|_{L^2(\p_{\la})}^2  ~ |\pf(\lambda)|~d\lambda.
   \end{align}
   Now, in view of the hypothesis, namely $\|f\|_1=1$, and the inequality \eqref{l1linf}, the last expression is dominated by 
   \begin{align*}
       \int_{\Lambda} \sum_{\alpha:\zeta(\alpha,\lambda)\leq 1} ~ |\pf(\lambda)|~d\lambda+  \sup_{\lambda \in \Lambda} \|\mathcal{F}(f)(\lambda) H(\eta(\lambda))^{\frac\beta2}\|^2_{\text{op}}\int_{\Lambda} \sum_{\alpha:\zeta(\alpha,\lambda)> 1} \zeta(\alpha,\lambda)^{-\beta} ~ |\pf(\lambda)|~d\lambda.
   \end{align*}
 Using the observations that $|\pf(\lambda)|\asymp \|\lambda\|^{n}$ and $\zeta(\alpha,\lambda)\asymp (|\alpha|+n)\|\lambda\|$ (see \eqref{pfest} and \eqref{eigenvalueest}), there exists positive constants $C_1,\:C_2$ such that
\begin{align}\label{smallfreq}
    \int_{\Lambda} \sum_{\alpha:\zeta(\alpha,\lambda)\leq 1} ~ |\pf(\lambda)|~d\lambda \lesssim \sum_{\alpha\in\N^n}\int_{\|\lambda\|\leq \frac{C_1}{|\alpha|+n}}\|\lambda\|^n d\lambda\lesssim\sum_{\alpha\in\N^n} (|\alpha|+n)^{-n-k}<\infty.
\end{align}
Here and throughout the section, we identify $\Lambda$ with $\R^k$ while performing integration over $\Lambda$. Similarly, using the hypothesis $\beta> Q/2=n+k$, we get 
\begin{align}
\label{integralcalc}
   & \nonumber \int_{\Lambda} \sum_{\alpha:\zeta(\alpha,\lambda)> 1} \zeta(\alpha,\lambda)^{-\beta} ~ |\pf(\lambda)|~d\lambda\\ \nonumber  &\lesssim \sum_{\alpha\in\mathbb{N}^n}(|\alpha|+n)^{-\beta} \int_{\|\lambda\|>\frac{C_2}{|\alpha|+n}} \|\lambda\|^{n-\beta}d\lambda\\ 
   &\lesssim\nonumber \sum_{\alpha\in\mathbb{N}^n}(|\alpha|+n)^{-\beta} \int_{\frac{C_2}{|\alpha|+n}}^\infty u^{n-\beta+k-1}
   du\\
   & \lesssim  \sum_{\alpha\in\N^{n}} (|\alpha|+n)^{-n-k} <\infty.
\end{align}
Using the estimates \eqref{smallfreq} and \eqref{integralcalc} in \eqref{planche}, we arrive at 
    \begin{equation*}
        \int_{\Lambda} \|\F (f)(\lambda)\|_{S_2(L^2(\p_{\la}))}^2~ |\pf(\lambda)|~d\lambda\lesssim 1+ \sup_{\lambda\in\Lambda} \|\mathcal{F}(f)(\lambda) H(\eta(\lambda))^{\frac\beta2}\|^2_{\text{op}}.
    \end{equation*}
  Since the right-hand side of the above inequality is finite, applying the Plancherel formula \eqref{plan}, we deduce that $f\in L^2(G).$  We now use H\"older inequality in \eqref{l1est1}, to obtain 
    \begin{align*}
        1-a^{-\gamma}&\leq \int_{|x|\leq a}|f(x)|\:dx
        \leq \left(\int_{|x|\leq a} dx\right)^{\frac12}\left(\int_{|x|\leq a}|f(x)|^2\:dx\right)^{\frac12}
    \end{align*}
    which shows via the integration in ``polar coordinate" formula \eqref{polarcordinate} that 
    \begin{equation}
        \label{l2normlowbdd}
        \|f\|_2^2\geq a^{-Q}(1-a^{-\gamma})^2.
    \end{equation}
 Now, following the approach in \eqref{integralcalc} and carefully tracking the constants, we obtain for any $c>0$ that
    \begin{equation}
        \label{integralcalc2}
        \int_{\Lambda}\sum_{\alpha:\zeta(\alpha,\la)>c}\zeta(\alpha,\la)^{-\beta}|\pf (\la)|\:d\lambda \leq C_3c^{n-\beta+k},
    \end{equation}
    where $C_3>0$ is constant depending only on $n$ and $k$. Consequently, for any $c>0$, this gives
 \begin{align}
 \label{l2lowbddestfourierside}
       & \int_{\Lambda}\sum_{\zeta(\alpha,\lambda)>c} \|\F(f)(\lambda) \Phi^{\eta{(\lambda)}}_{\alpha}\|_{L^2(\p_{\la})}^2 ~ |\pf(\lambda)|~d\lambda\\ \nonumber
       &=  \int_{\Lambda}\sum_{\zeta(\alpha,\lambda)>c}\zeta(\alpha,\la)^{-\frac{\beta}{2}} \|\F(f)(\lambda) H(\eta(\la))^{\frac{\beta}{2}}\Phi^{\eta{(\lambda)}}_{\alpha}\|_{L^2(\p_{\la})}^2 ~ |\pf(\lambda)|~d\lambda\\ \nonumber
       &\leq \sup_{\lambda\in\Lambda}\|\F(f)(\lambda) H(\eta(\la))^{\frac{\beta}{2}}\|^2_{\text{op}}
        \int_{\Lambda}\sum_{\zeta(\alpha,\lambda)>c}\zeta(\alpha,\la)^{-\frac{\beta}{2}} ~ |\pf(\lambda)|~d\lambda\\ \nonumber
        & \leq C_4 c^{n-\beta+k}.
    \end{align}
Finally, combining the last estimate with \eqref{l2normlowbdd}, we conclude that for any $c, a > 0$, we have
 \begin{align*}
    &\int_{\Lambda}\sum_{\alpha:\zeta(\alpha,\lambda)\leq c} \|\F (f)(\lambda) \Phi^{\eta{(\lambda)}}_{\alpha}\|_{L^2(\p_{\la})}^2 ~ |\pf(\lambda)|~d\lambda\\&= \int_{\Lambda}\|\F (f)(\lambda)\|_{S_2(L^2(\p_{\la}))}^2|\pf(\la)|\ d\lambda-\int_{\Lambda}\sum_{\alpha:\zeta(\alpha,\lambda)>c} \|\F (f)(\lambda) \Phi^{\eta{(\lambda)}}_{\alpha}\|_{L^2(\p_{\la})}^2 ~ |\pf(\lambda)|~d\lambda\\
        &= \|f\|_2^2-\int_{\Lambda}\sum_{\alpha:\zeta(\alpha,\lambda)>c} \|\F (f)(\lambda) \Phi^{\eta{(\lambda)}}_{\alpha}\|_{L^2(\p_{\la})}^2 ~ |\pf(\lambda)|~d\lambda\\
        &\geq  a^{-Q}(1-a^{-\gamma})^2-C_4c^{n-\beta+k}.
    \end{align*}
    Since $\beta>Q/2=n+k,$ we can choose $c$ sufficiently large, and $a$ accordingly so that the last quantity is a positive constant.  Therefore, we get positive constants $C_0$ and $c_1$ such that 
    \begin{align*}
    \int_{\Lambda}\sum_{\alpha:\zeta(\alpha,\lambda)\leq c_1} \|\F (f)(\lambda) \Phi^{\eta{(\lambda)}}_{\alpha}\|_{L^2(\p_{\la})}^2 ~ |\pf(\lambda)|~d\lambda \geq C_0.
    \end{align*}
 This, in view of $\|\F f(\la)\|_{\text{op}}\leq \|f\|_1=1$, implies 
 $$ \int_{\Lambda}\sum_{\alpha:\zeta(\alpha,\lambda)\leq c_1} \|\F (f)(\lambda) \Phi^{\eta{(\lambda)}}_{\alpha}\|_{L^2(\p_{\la})} ~ |\pf(\lambda)|~d\lambda \geq C_0.$$
   We now choose $0<c_2<c_1$ suitably so that 
   \begin{align}
   \label{l1lastest}
        \int_{\Lambda}\sum_{\alpha:c_2\leq \zeta(\alpha,\lambda)\leq c_1} \|\F (f)(\lambda) \Phi^{\eta{(\lambda)}}_{\alpha}\|_{L^2(\p_{\la})} ~ |\pf(\lambda)|~d\lambda \geq \frac{C_0}2.
   \end{align} 
   We observe that 
   \begin{align*}
   & \int_{\Lambda}\sum_{\alpha:c_2\leq \zeta(\alpha,\lambda)\leq c_1} \|\F (f)(\lambda) \Phi^{\eta{(\lambda)}}_{\alpha}\|_{L^2(\p_{\la})} ~ |\pf(\lambda)|~d\lambda\\
    & =\int_{\Lambda}\sum_{\alpha:c_2\leq \zeta(\alpha,\lambda)\leq c_1}\zeta(\alpha,\la)^{-\frac{\beta}2} \|\F (f)(\lambda) H(\eta(\lambda))^{\frac{\beta}2}\Phi^{\eta{(\lambda)}}_{\alpha}\|_{L^2(\p_{\la})} ~ |\pf(\lambda)|~d\lambda\\
    &\leq c_2^{-\frac{\beta}{2}} \sup_{\lambda\in\Lambda} \|\mathcal{F}(f)(\lambda) H(\eta(\lambda))^{\frac\beta2}\|_{\text{op}} \int_{\Lambda}\sum_{\alpha:c_2\leq \zeta(\alpha,\lambda)\leq c_1} |\pf(\la)| d\lambda,
   \end{align*}
   where the last integral is finite. This, in view of \eqref{l1lastest}, proves that there exists a positive constant $C(\beta,n,k)$ such that  
$$ \sup_{\lambda\in \Lambda} \|\mathcal{F}(f)(\lambda) H(\eta(\lambda))^{\frac{\beta}{2}}\|_{\text{op}} \geq C(\beta,n,k), $$  
completing the proof of the theorem.
\end{proof}
\subsection{Proof for $1<p<2$ case}
The core idea of the proof is to suitably adapt the approach used in the $p=1$ case. However, since the Schatten norms are more intricate than the operator norm,  its analysis demands a more delicate and nuanced treatment.
To improve readability and facilitate a structured approach to the proof, we first isolate a crucial intermediate step and present it as a lemma. This will help clarify the underlying argument before proceeding to the main proof.
 \begin{lem}
 \label{p12lemma}
     Let $1<p<2$, and $\beta> Q(1/p-1/2).$ Assume that $f\in L^p(G)$ is such that 
     $$A(f,\beta):=\int_{\Lambda}\|\F (f)(\lambda)H(\eta(\lambda))^{\frac{\beta}{2}}\|^{p'}_{S_{p'}(L^2(\p_{\la}))} |\pf(\la)|~d\lambda<\infty.$$ Then for any fixed $r > 0$, the following holds:  
 \begin{equation}
     \label{esteigler}
   \int_{\Lambda}\sum_{\alpha:\zeta(\alpha,\la)\leq r}\|\F (f)(\la)\Phi^{\eta{(\lambda)}}_{\alpha}\|_{L^2(\p_{\la})}^2 |\pf(\la)|~d\lambda \lesssim r^{(n+k)(1-2/p')}\|f\|_p^2,
     \end{equation}
     and 
     \begin{equation}
     \label{esteigger}
         \int_{\Lambda}\sum_{\alpha:\zeta(\alpha,\la)> r}\|\F (f)(\la)\Phi^{\eta{(\lambda)}}_{\alpha}\|_{L^2(\p_{\la})}^2 |\pf(\la)|~d\lambda \lesssim \left(r^{n+k-\beta p/(2-p)}\right)^{1-\frac{2}{p'}} ~A(f,\beta)^2.
     \end{equation}
     Consequently, $f\in L^2(G).$
 \end{lem}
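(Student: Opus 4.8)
The plan is to prove both displayed inequalities by splitting, for each fixed $r>0$, the Plancherel-type sum $\sum_{\alpha\in\N^n}\|\F(f)(\la)\Phi_\alpha^{\eta(\la)}\|_{L^2(\p_\la)}^2$ into the ``low-frequency'' part $\{\alpha:\zeta(\alpha,\la)\le r\}$ and the ``high-frequency'' part $\{\alpha:\zeta(\alpha,\la)>r\}$, estimating each over $\Lambda$ separately: \eqref{esteigler} is the low part, \eqref{esteigger} the high part, and $f\in L^2(G)$ then follows by taking, say, $r=1$ in both, adding, and invoking the converse direction of the Plancherel formula \eqref{plan}, exactly as in the proof of the $p=1$ case. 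Throughout I would use the elementary identities $1-2/p'=(2-p)/p=2(1/p-1/2)$ and $p'/(p'-2)=p/(2-p)$, so that the hypothesis reads $\beta>Q(1/p-1/2)=(n+k)(1-2/p')$, and the Hölder exponent conjugate to $p'/2$ is $p'/(p'-2)=p/(2-p)$.

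For the low-frequency bound I would fix $\la$, let $P_{r,\la}$ denote the orthogonal projection of $L^2(\p_\la)$ onto $\mathrm{span}\{\Phi_\alpha^{\eta(\la)}:\zeta(\alpha,\la)\le r\}$, and note that the inner sum equals $\|\F(f)(\la)P_{r,\la}\|_{S_2}^2$, the square of the Hilbert--Schmidt norm of an operator of rank at most $N(r,\la):=\#\{\alpha\in\N^n:\zeta(\alpha,\la)\le r\}$. Since $p'>2$, Hölder for singular values gives $\|T\|_{S_2}\le(\mathrm{rank}\,T)^{1/2-1/p'}\|T\|_{S_{p'}}$, so the inner sum is $\le N(r,\la)^{1-2/p'}\|\F(f)(\la)\|_{S_{p'}}^2$. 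By \eqref{eigenvalueest}, $N(r,\la)=0$ unless $\|\la\|\lesssim r$, and then $N(r,\la)\lesssim(r/\|\la\|)^n$. Inserting this and applying Hölder in $\la$ with respect to $|\pf(\la)|\,d\la$ and the pair $(p'/2,\,p'/(p'-2))$, the $\F(f)$-factor is $\le\big(\int_\Lambda\|\F(f)(\la)\|_{S_{p'}}^{p'}|\pf(\la)|\,d\la\big)^{2/p'}\le C_p^2\|f\|_p^2$ by Hausdorff--Young \eqref{hyinq}, while the remaining factor is $\big(\int_{\|\la\|\lesssim r}(r/\|\la\|)^{n(1-2/p')\,p'/(p'-2)}|\pf(\la)|\,d\la\big)^{(p'-2)/p'}$. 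Here $n(1-2/p')\,p'/(p'-2)=n$ exactly, and $|\pf(\la)|\asymp\|\la\|^n$ by \eqref{pfest}, so the integrand is $\asymp r^n$ and the integral over a ball of radius $\asymp r$ in $\R^k$ is $\asymp r^{n+k}$; raising to the power $(p'-2)/p'=1-2/p'$ produces exactly the factor $r^{(n+k)(1-2/p')}$, which is \eqref{esteigler}.

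For the high-frequency bound, set $G(\la):=\F(f)(\la)H(\eta(\la))^{\beta/2}$, so $A(f,\beta)=\int_\Lambda\|G(\la)\|_{S_{p'}}^{p'}|\pf(\la)|\,d\la$, and use $H(\eta(\la))^{\beta/2}\Phi_\alpha^{\eta(\la)}=\zeta(\alpha,\la)^{\beta/2}\Phi_\alpha^{\eta(\la)}$ to rewrite $\|\F(f)(\la)\Phi_\alpha^{\eta(\la)}\|_{L^2}^2=\zeta(\alpha,\la)^{-\beta}\|G(\la)\Phi_\alpha^{\eta(\la)}\|_{L^2}^2$. Introducing the diagonal operator $D_{r,\la}$ with $D_{r,\la}\Phi_\alpha^{\eta(\la)}=\zeta(\alpha,\la)^{-\beta/2}\mathbf{1}_{\{\zeta(\alpha,\la)>r\}}\Phi_\alpha^{\eta(\la)}$, the inner sum equals $\|G(\la)D_{r,\la}\|_{S_2}^2$, and the Hölder inequality for Schatten norms, $\|AB\|_{S_2}\le\|A\|_{S_{p'}}\|B\|_{S_q}$ with $1/q=1/2-1/p'$ (i.e.\ $q=2p/(2-p)$), bounds it by $\|G(\la)\|_{S_{p'}}^2\|D_{r,\la}\|_{S_q}^2$. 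Integrating against $|\pf(\la)|\,d\la$ and applying Hölder in $\la$ with $(p'/2,\,p'/(p'-2))$, the first factor gives $\big(\int_\Lambda\|G(\la)\|_{S_{p'}}^{p'}|\pf(\la)|\,d\la\big)^{2/p'}=A(f,\beta)^{2/p'}$ (this Hölder step naturally produces the power $2/p'$); since $2\cdot p'/(p'-2)=q$, the second factor is $\big(\int_\Lambda\|D_{r,\la}\|_{S_q}^{q}|\pf(\la)|\,d\la\big)^{(p'-2)/p'}$, where $\|D_{r,\la}\|_{S_q}^q=\sum_{\alpha:\zeta(\alpha,\la)>r}\zeta(\alpha,\la)^{-\beta q/2}$ and $\beta q/2=\beta p/(2-p)$. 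The hypothesis $\beta>(n+k)(1-2/p')$ is exactly $\beta q/2>n+k$, so the computation behind \eqref{integralcalc2} (with $\beta$ there replaced by $\beta q/2$) yields $\int_\Lambda\|D_{r,\la}\|_{S_q}^{q}|\pf(\la)|\,d\la\lesssim r^{\,(n+k)-\beta p/(2-p)}$, and raising to the power $(p'-2)/p'=1-2/p'$ gives the stated factor $\big(r^{\,(n+k)-\beta p/(2-p)}\big)^{1-2/p'}$ in \eqref{esteigger}.

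Finally, adding \eqref{esteigler} and \eqref{esteigger} at $r=1$ gives $\int_\Lambda\|\F(f)(\la)\|_{S_2(L^2(\p_\la))}^2|\pf(\la)|\,d\la<\infty$; since the group Fourier transform of $f\in L^p(G)$ is already meaningful via \eqref{hyinq}, the converse part of the Plancherel formula \eqref{plan} (together with injectivity of the group Fourier transform), after a routine approximation of $f$ by functions in $L^1(G)\cap L^p(G)$ or in $L^p(G)\cap L^\infty(G)\subset L^2(G)$, forces $f\in L^2(G)$, exactly as in the $p=1$ case. I expect the crux to be the high-frequency estimate, specifically the mismatch that the Hermite weight $\zeta(\alpha,\la)^{-\beta/2}$ is diagonal in the basis $\{\Phi_\alpha^{\eta(\la)}\}$ rather than in the singular-value basis of $\F(f)(\la)$ or $G(\la)$: it is the device of factoring $G(\la)D_{r,\la}$ and invoking Schatten--Hölder that converts this into a clean product of $\|G(\la)\|_{S_{p'}}$ and the $S_q$-norm of a purely numerical diagonal operator, after which one only has to check that the arithmetic ($q/2=p/(2-p)$, $\beta q/2>n+k$) falls out of the hypothesis on $\beta$. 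The low-frequency estimate is comparatively soft, the only real input beyond Hausdorff--Young being the finite-rank comparison $\|T\|_{S_2}\le(\mathrm{rank}\,T)^{1/2-1/p'}\|T\|_{S_{p'}}$.
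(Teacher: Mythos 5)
Your proof is correct and follows essentially the same strategy as the paper: the same low/high frequency split at level $r$, the same H\"older exponents $p'/2$ and $p'/(p'-2)=p/(2-p)$, Hausdorff--Young \eqref{hyinq}, and the counting estimates derived from \eqref{pfest} and \eqref{eigenvalueest}. The only real difference is packaging: where the paper applies H\"older to the joint sum--integral over $\N^n\times\Lambda$ and controls $\sum_\alpha\|T\Phi_\alpha^{\eta(\la)}\|^{p'}$ by $\|T\|_{S_{p'}}^{p'}$ via the frame characterization (Theorem \ref{pgeq2schatten}), you work at the operator level for fixed $\la$, using the rank comparison $\|TP_{r,\la}\|_{S_2}\le N(r,\la)^{1/2-1/p'}\|T\|_{S_{p'}}$ for the low-frequency part and the Schatten--H\"older inequality $\|G(\la)D_{r,\la}\|_{S_2}\le\|G(\la)\|_{S_{p'}}\|D_{r,\la}\|_{S_q}$ with $q=2p/(2-p)$ for the high-frequency part; these are equivalent computations and every exponent matches. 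A useful byproduct of your bookkeeping: the power of $A(f,\beta)$ that the argument actually produces is $2/p'$ --- exactly as in the paper's own proof --- rather than the exponent $2$ displayed in \eqref{esteigger}, so the statement of the lemma appears to contain a typo that your derivation implicitly corrects.
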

 \begin{proof}
 Fix $r>0.$ Using H\"older's inequality, we observe that 
   \begin{align}\label{hsest}
   &\int_{\Lambda} \sum_{\alpha:\zeta(\alpha,\lambda)\leq r}\|\F (f)(\lambda) \Phi^{\eta{(\lambda)}}_{\alpha}\|_{L^2(\p_{\la})}^2 ~ |\pf(\lambda)|~d\lambda\nonumber\\
       &\leq \left(\int_{\Lambda} \sum_{\alpha:\zeta(\alpha,\lambda)\leq r}\|\F(f)(\lambda) \Phi^{\eta{(\lambda)}}_{\alpha}\|_{L^2(\p_{\la})}^{p'} ~ |\pf(\lambda)|~d\lambda \right)^{\frac2{p'}} \left(\int_{\Lambda} \sum_{\alpha:\zeta(\alpha,\lambda)\leq r} ~ |\pf(\lambda)|~d\lambda  \right)^{1-\frac2{p'}}
   \end{align}
   where the last term, in view of \eqref{pfest}, and \eqref{eigenvalueest}, can be estimated as 
   \begin{align}\label{estpffian}
   \int_{\Lambda} \sum_{\alpha:\zeta(\alpha,\lambda)\leq r} ~ |\pf(\lambda)|~d\lambda\lesssim \int_{\Lambda} \sum_{\alpha:\|\la\|(|\alpha|+n)\leq cr} ~ \|\lambda\|^n~d\lambda= \sum_{\alpha}\int_{\|\la\|\leq \frac{cr}{|\alpha|+n}}\|\la\|^n d\la\lesssim r^{n+k}.
   \end{align}
   Since $p' > 2$, applying Theorem \ref{pgeq2schatten}, we deduce that  
\begin{align*}
       \sum_{\alpha:\zeta(\alpha,\lambda)\leq r}\|\F f(\lambda) \Phi^{\eta{(\lambda)}}_{\alpha}\|_{L^2(\p_{\la})}^{p'}\leq \sum_{\alpha \in\mathbb{N}^n}\|\F f(\lambda) \Phi^{\eta{(\lambda)}}_{\alpha}\|_{L^2(\p_{\la})}^{p'}\leq \|\F f(\la)\|_{S_{p'}({L^2(\p_{\la})})}^{p'}.
   \end{align*}
  which, invoking the Hausdorff--Young inequality \eqref{hyinq}, results in
   \begin{align}\label{otherest}
\left(\int_{\Lambda} \sum_{\alpha:\zeta(\alpha,\lambda)\leq r}\|\F (f)(\lambda) \Phi^{\eta{(\lambda)}}_{\alpha}\|_{L^2(\p_{\la})}^{p'} ~ |\pf(\lambda)|~d\lambda \right)^{\frac2{p'}}  \lesssim \|f\|_p^2.
   \end{align}
  Using the estimates \eqref{estpffian} and \eqref{otherest} in \eqref{hsest} we obtain
   \begin{equation*}
       \label{estI1}
       \int_{\Lambda} \sum_{\alpha:\zeta(\alpha,\lambda)\leq r}\|\F f(\lambda) \Phi^{\eta{(\lambda)}}_{\alpha}\|_{L^2(\p_{\la})}^2 ~ |\pf(\lambda)|~d\lambda \lesssim r^{(n+k)(1-2/p')} \|f\|_p^2,
   \end{equation*}     
   proving \eqref{esteigler}.  Now, to show \eqref{esteigger}, we first write 
   \begin{align}\label{otherl2}
      & \int_{\Lambda}\sum_{\alpha:\zeta(\alpha,\la)> r}\|\F (f)(\la)\Phi^{\eta{(\lambda)}}_{\alpha}\|_{L^2(\p_{\la})}^2 |\pf(\la)|~d\lambda\nonumber\\
      &=\int_{\Lambda} \sum_{\alpha:\zeta(\alpha,\lambda)> r}\zeta(\alpha,\lambda)^{-\beta} \zeta(\alpha,\lambda)^{\beta}\|\F (f)(\lambda) \Phi^{\eta{(\lambda)}}_{\alpha}\|_{L^2(\p_{\la})}^2  ~ |\pf(\lambda)|~d\lambda\nonumber\\
      &\leq\left(\int_{\Lambda} \sum_{\alpha:\zeta(\alpha,\lambda)> r} \zeta(\alpha,\lambda)^{\beta p'/2}\|\F (f)(\lambda) \Phi^{\eta{(\lambda)}}_{\alpha}\|_{L^2(\p_{\la})}^{p'}  ~ |\pf(\lambda)|~d\lambda\right)^{\frac2{p'}}\nonumber\\&\quad\quad\times\left(\int_{\Lambda} \sum_{\alpha:\zeta(\alpha,\lambda)> r}\zeta(\alpha,\lambda)^{-\beta p/(2-p)}  ~ |\pf(\lambda)|~d\lambda\right)^{1-\frac{2}{p'}},
   \end{align}
where we have applied H\"older's inequality to attain the last inequality.  Using \eqref{pfest}, \eqref{eigenvalueest} and Fubini's theorem, there exist constant $c>0$ such that  
   \begin{align}\label{zetapf}
      & \int_{\Lambda} \sum_{\alpha:\zeta(\alpha,\lambda)> r}\zeta(\alpha,\lambda)^{-\beta p/(2-p) }  ~ |\pf(\lambda)|~d\lambda \nonumber\\&\lesssim  \sum_{\alpha\in\mathbb{N}^n}(|\alpha|+n)^{-\beta p/(2-p)}\int_{\|\la\|> \frac{cr}{|\alpha|+n}}\|\la\|^{n-\beta p/(2-p)} d\la \nonumber\\
       & \lesssim r^{n+k-\beta p/(2-p)} \sum_{\alpha\in\mathbb{N}^n} (|\alpha|+n)^{-n-k},
   \end{align}
   where in the second last inequality, the integral is finite  because $$\beta> Q(1/p-1/2)=(n+k)\frac{2-p}{p}.$$ Next, for the other integral, using Theorem \ref{pgeq2schatten}, we see that 
   \begin{align*}
       &\int_{\Lambda} \sum_{\alpha:\zeta(\alpha,\lambda)> r} \zeta(\alpha,\lambda)^{\beta p'/2}\|\F (f)(\lambda) \Phi^{\eta{(\lambda)}}_{\alpha}\|_{L^2(\p_{\la})}^{p'}  ~ |\pf(\lambda)|~d\lambda\\
       &\leq \int_{\Lambda} \sum_{\alpha\in\mathbb{N}^n} \|\F(f)(\lambda)H(\eta(\la))^{\frac{\beta}2} \Phi^{\eta{(\lambda)}}_{\alpha}\|_{L^2(\p_{\la})}^{p'}  ~ |\pf(\lambda)|~d\lambda\\
       &\lesssim \int_{\Lambda}  \|\F f(\lambda)H(\eta(\la))^{\frac{\beta}2}\|_{S_{p'}(L^2(\p_{\la}))}^{p'}  ~ |\pf(\lambda)|~d\lambda.
   \end{align*}
   Using this inequality together with \eqref{zetapf} in \eqref{otherl2} we get
   \begin{align*}
       &\int_{\Lambda}\sum_{\alpha:\zeta(\alpha,\la)> r}\|\F (f)(\la)\Phi^{\eta{(\lambda)}}_{\alpha}\|_{L^2(\p_{\la})}^2 |\pf(\la)|~d\lambda\\
       & \lesssim \left(r^{n+k-\beta p/(2-p)}\right)^{1-\frac{2}{p'}}\left(\int_{\Lambda}  \|\F (f)(\lambda)H(\eta(\la))^{\frac{\beta}2}\|_{S_{p'}({L^2(\p_{\la})})}^{p'}  ~ |\pf(\lambda)|~d\lambda\right)^{\frac2{p'}},
   \end{align*}
   completing the proof of \eqref{esteigler}. Finally, using \eqref{esteigler}, and \eqref{esteigger}, we see that
   \begin{align*}
       &\int_{\Lambda} \|\F (f)(\lambda)\|_{S_{2}(L^2(\p_{\la}))}^2~ |\pf(\lambda)|~d\lambda\\& =\int_{\Lambda} \sum_{\alpha\in \mathbb{N}^n}\|\F (f)(\lambda) \Phi^{\eta{(\lambda)}}_{\alpha}\|_{L^2(\p_{\la})}^2 ~ |\pf(\lambda)|~d\lambda\\
       &=\int_{\Lambda} \sum_{\alpha:\zeta(\alpha,\lambda)\leq r}\|\F (f)(\lambda) \Phi^{\eta{(\lambda)}}_{\alpha}\|_{L^2(\p_{\la})}^2 ~ |\pf(\lambda)|~d\lambda + \int_{\Lambda} \sum_{\alpha:\zeta(\alpha,\lambda)> r}\|\F (f)(\lambda) \Phi^{\eta{(\lambda)}}_{\alpha}\|_{L^2(\p_{\la})}^2  ~ |\pf(\lambda)|~d\lambda\\
       &\lesssim r^{(n+k)(1-2/p')}\|f\|
       _p^2+\left(r^{n+k-\beta p/(2-p)}\right)^{1-\frac{2}{p'}} A (f,\beta)^2<\infty,
   \end{align*}
   by the hypothesis, which, in light of the Plancherel formula \eqref{plan}, establishes that $f \in L^2(G)$.  
\end{proof}
We now proceed with the proof for the case $1<p<2$.
\begin{thm}
    Let $1<p<2$. Suppose that $\gamma>0$, and $\beta>Q(1/p-1/2)$. Then, for all $f\in L^p(G)$, we have 
    \begin{align*}
        \|f\|_p^{\gamma+\beta}\lesssim \||\cdot|^{\gamma}f\|_p^\beta \left( \int_{\Lambda} \| \F (f)(\lambda)H(\eta(\lambda))^{\frac{\beta}{2}}\|^{p'}_{S_{p'}(L^2(\p_{\la}))} |\pf(\lambda)|\:d\lambda\right)^{\frac{\gamma}{p'}}.
    \end{align*}
\end{thm}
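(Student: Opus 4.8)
The plan is to run the three-step scheme of the proof of Theorem~\ref{l1case}: reduce, by the dilation structure, to a normalized $f$; extract a lower bound for $\|f\|_2$ from the weighted hypothesis; and then play this against the Plancherel decomposition of $\|f\|_2^2$. The decisive simplification over the $L^1$ case is that Lemma~\ref{p12lemma} already bounds the low- and high-Hermite-frequency pieces of $\|f\|_2^2$ \emph{from above} --- the first in terms of $\|f\|_p$, the second in terms of $A(f,\beta)$ --- so a single choice of the truncation level finishes the argument.

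For the reduction, put $h(x)=c\,f(\delta_{r^{-1}}(x))$ for $c,r>0$. From \eqref{homdim} and the substitution $y=\delta_{r^{-1}}(x)$ one gets $\|h\|_p=cr^{Q/p}\|f\|_p$ and $\||\cdot|^\gamma h\|_p=cr^{\gamma+Q/p}\||\cdot|^\gamma f\|_p$, while Lemma~\ref{dila-FT} together with the identity \eqref{dil-Hlambda} (used exactly as in Theorem~\ref{l1case}) gives
$$\F(h)(\la)\,H(\eta(\la))^{\beta/2}=cr^{Q-\beta}\,d_{r^{-1}}\!\circ\bigl(\F(f)(r^2\la)\,H(\eta(r^2\la))^{\beta/2}\bigr)\circ d_{r^{-1}}^{-1}.$$
Since $d_{r^{-1}}$ is a positive scalar times a unitary on $L^2(\p_\la)$, the map $T\mapsto d_{r^{-1}}\!\circ T\circ d_{r^{-1}}^{-1}$ is conjugation by a unitary and hence isometric in every Schatten norm; combining this with the substitution $\mu=r^2\la$ and the degree-one homogeneity of the $\eta_j$ (so $|\pf(s\la)|=s^n|\pf(\la)|$) yields $A(h,\beta)=(cr^{Q-\beta})^{p'}r^{-Q}A(f,\beta)$. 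A short bookkeeping then shows that passing from $f$ to $h$ multiplies both sides of the claimed inequality by the same factor $c^{\gamma+\beta}r^{Q(\gamma+\beta)/p}$, so we may assume $\|f\|_p=\||\cdot|^\gamma f\|_p=1$ and reduce to proving $A(f,\beta)\ge c>0$; we may also assume $A(f,\beta)<\infty$, since otherwise the inequality is trivial, and then Lemma~\ref{p12lemma} gives $f\in L^2(G)$.

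Next, from $\||\cdot|^\gamma f\|_p=1$ we have $\int_{|x|>a}|f|^p\le a^{-\gamma p}$, hence $\int_{|x|\le a}|f|^p\ge 1-a^{-\gamma p}$; H\"older's inequality with exponents $2/p$ and $2/(2-p)$ together with the polar-coordinate identity $\int_{|x|\le a}dx=a^Q/Q$ from \eqref{polarcordinate} give $\|f\|_2^2\gtrsim a^{-Q(2-p)/p}(1-a^{-\gamma p})^{2/p}$, and fixing, say, $a=2^{1/(\gamma p)}$ yields a constant $c_0>0$ (depending only on $p,\gamma,n,k$) with $\|f\|_2^2\ge c_0$. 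Now by the Plancherel formula \eqref{plan} and \eqref{scaledher}, for every $r>0$,
$$c_0\le\|f\|_2^2=\int_{\Lambda}\sum_{\zeta(\alpha,\la)\le r}\|\F(f)(\la)\Phi^{\eta(\la)}_\alpha\|_{L^2(\p_\la)}^2|\pf(\la)|\,d\la+\int_{\Lambda}\sum_{\zeta(\alpha,\la)> r}\|\F(f)(\la)\Phi^{\eta(\la)}_\alpha\|_{L^2(\p_\la)}^2|\pf(\la)|\,d\la,$$
and by Lemma~\ref{p12lemma} (with $\|f\|_p=1$) the first term is $\lesssim r^{(n+k)(1-2/p')}$ while the second is $\lesssim r^{(n+k-\beta p/(2-p))(1-2/p')}A(f,\beta)^2$. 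Writing $\sigma:=1-2/p'\in(0,1)$ and using the hypothesis $\beta>Q(1/p-1/2)$ --- equivalently $\beta p/(2-p)>n+k$ --- the first exponent is positive and the second is negative, so choosing the \emph{fixed} value $r=r_0$ for which the first term equals $c_0/2$ forces $c_0/2\le C r_0^{(n+k-\beta p/(2-p))\sigma}A(f,\beta)^2$, whence $A(f,\beta)^2\ge c_1>0$ with $c_1$ depending only on $\beta,\gamma,n,k,p$. Undoing the normalization of the first step completes the proof.

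The genuinely hard analytic content --- the Hausdorff--Young input, the splitting of the Hilbert--Schmidt mass of $\F(f)$ along the Hermite frequencies $\zeta(\alpha,\la)$, and the integral estimates for $\int_{\Lambda}\sum_{\zeta\le r}|\pf|$ and $\int_{\Lambda}\sum_{\zeta>r}\zeta^{-\beta p/(2-p)}|\pf|$ --- is already packaged in Lemma~\ref{p12lemma}, so what requires care here is, first, the operator-valued dilation bookkeeping (in particular the unitary-conjugation observation that makes the Schatten norm ignore the dilation operators $d_{r^{-1}}$), and, second, checking that the truncation level $r_0$ in the last step is determined by fixed data and not chosen circularly through $f$ or $A(f,\beta)$. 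These are precisely the places where the operator-valued, non-isotropic setting departs from the scalar Euclidean argument.
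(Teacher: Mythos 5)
Your proposal is correct, and its skeleton (dilation normalization, Lemma~\ref{p12lemma}, a lower bound on $\|f\|_2$ from the weighted hypothesis, and a spectral splitting of $\|f\|_2^2$ along the Hermite frequencies) coincides with the paper's. Where you genuinely diverge is in the endgame. The paper first assumes, without loss of generality, that $A(f,\beta)\le 1$, uses \eqref{esteigger} to deduce that the low-frequency mass $\int_\Lambda\sum_{\zeta\le r_1}\|\F(f)(\la)\Phi_\alpha^{\eta(\la)}\|^2|\pf(\la)|\,d\la$ is bounded below, then shrinks to an annulus $r_2<\zeta\le r_1$ via \eqref{esteigler}, and finally converts the annular $L^2$ mass into a lower bound on $A(f,\beta)$ by a second application of the frame characterization (Theorem~\ref{pgeq2schatten}) together with H\"older. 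You instead observe that \eqref{esteigger} already carries the factor $A(f,\beta)$, so once a single truncation level $r_0$ (depending only on $p,\gamma,\beta,n,k$) forces the low-frequency part below $c_0/2$, the inequality $c_0/2\le C r_0^{(n+k-\beta p/(2-p))(1-2/p')}A(f,\beta)^{2/p'}$ yields the desired lower bound directly; this removes the annulus, the second radius, and the second use of the frame theorem, and it only requires $A(f,\beta)<\infty$ rather than $A(f,\beta)\le 1$. Your dilation bookkeeping (in particular the remark that $d_{r^{-1}}$ is a scalar multiple of a unitary, so conjugation by it is a Schatten isometry) is exactly what is needed and matches the paper's computation in Theorem~\ref{l1case}. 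Two cosmetic points: you should choose $r_0$ so that the \emph{upper bound} $Cr_0^{(n+k)(1-2/p')}$ on the first term is at most $c_0/2$ (the term itself need not equal anything prescribed), and the power $A(f,\beta)^2$ you quote from \eqref{esteigger} should be $A(f,\beta)^{2/p'}$ as the proof of Lemma~\ref{p12lemma} actually shows (the exponent is immaterial for the conclusion, since any positive power of $A(f,\beta)$ being bounded below suffices).
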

\begin{proof}
As in the proof of Theorem \ref{l1case}, we can verify that the desired inequality remains unchanged under dilation and scalar multiplication. Therefore, we may assume that
\begin{align*}
    \||\cdot|^{\gamma}f\|_p=1=\|f\|_p.
\end{align*}

Now, without loss of generality, we may assume that 
\begin{equation}
    \label{assump}
    \int_{\Lambda} \| \F (f)(\lambda)H(\eta(\lambda))^{\frac{\beta}{2}}\|^{p'}_{S_{p'}(L^2(\p_{\la}))} |\pf(\lambda)|\:d\lambda \leq 1,
\end{equation}
as the desired inequality is trivial otherwise. Thus, it is enough to prove that there exists a positive constant $C(\beta,n,k,p)$ such that 
    \begin{equation}
        \label{p12mainclaim}
        \int_{\Lambda} \| \F (f)(\lambda)H(\eta(\lambda))^{\frac{\beta}{2}}\|^{p'}_{S_{p'}(L^2(\p_{\la}))} |\pf(\lambda)|\:d\lambda \geq C(\beta,n,k,p).
    \end{equation}
    Under the assumption that \eqref{assump} holds, by Lemma \ref{p12lemma}, we have $ f \in L^2(G) $. Applying Plancherel formula \eqref{plan} along with \eqref{esteigler} and \eqref{esteigger}, we obtain that for any $ r > 0 $,
   \begin{equation}
   \label{l2estfup}
       \|f\|_2^2\lesssim r^{n+k}(1+r^{-\beta p/(2-p)}).
   \end{equation}
   Now, as in the proof of the Theorem \ref{l1case}, for any $a>0$, we have 
    \begin{align*}
      1=\int_{G}|x|^{\gamma p} |f(x)|^p\:dx\geq \int_{|x|\ge a}|x|^{\gamma p}|f(x)|^p\:dx\geq  a^{\gamma p} \int_{|x|\geq a} |f(x)|^p\:dx 
    \end{align*}
   which yields 
   \begin{equation}
       \label{lpest1}
       \int_{|x|\leq a}|f(x)|^p\:dx=\|f\|^p_p-\int_{|x|\ge a} |f(x)|^p\:dx\geq 1-a^{-\gamma p}.
   \end{equation}

 Now, applying H\"older's inequality, we obtain
   \begin{align*}
       \int_{|x|\leq a} |f(x)|^p\:dx\leq \left( \int_{|x|\leq a} |f(x)|^2\:dx\right)^{\frac{p}2}\left(\int_{|x|\leq a}dx\right)^{1-\frac{p}2}\lesssim a^{Q(1-p/2)}\left( \int_{|x|\leq a} |f(x)|^2\:dx\right)^{\frac{p}2},
   \end{align*}
  which, in view of \eqref{lpest1}, for any $a>0$, then implies 
   \begin{align*}
       \int_{|x|\leq a} |f(x)|^2\:dx \gtrsim \left((1-a^{-\gamma p})a^{Q(p/2-1)}\right)^{\frac{2}{p}}=(1-a^{-\gamma p})^{\frac{2}{p}}a^{Q(1-2/p)}.
   \end{align*}
   This, combined with the Plancherel formula and \eqref{esteigger}, establishes the following spectral estimate on the Fourier transform side: for any $r>0,\:a>0$,
   \begin{align*}
     &  \int_{\Lambda}\sum_{\alpha:\zeta(\alpha, \lambda)\leq r} \|\F(f)(\la)\Phi^{\eta{(\lambda)}}_{\alpha}\|_{L^2(\p_{\la})}^2 ~|\pf (\la)|~d\lambda\\
     &= \|f\|_2^2- \int_{\Lambda}\sum_{\alpha:\zeta(\alpha, \lambda)>r} \|\F (f)(\la)\Phi^{\eta{(\lambda)}}_{\alpha}\|_{L^2(\p_{\la})}^2 ~|\pf (\la)|~d\lambda\\
     &\geq  \int_{|x|\leq a} |f(x)|^2\:dx- \int_{\Lambda}\sum_{\alpha:\zeta(\alpha, \lambda)>r} \|\F (f)(\la)\Phi^{\eta{(\lambda)}}_{\alpha}\|_{L^2(\p_{\la})}^2 ~|\pf (\la)|~d\lambda\\
     &\geq C_1(1-a^{-\gamma p})^{\frac{2}{p}}a^{Q(1-2/p)}- C(n,k,\beta)r^{\frac{Q(2-p)}{2p}-\beta}
   \end{align*}
Since  $\beta>Q(1/p-1/2)$, we note that $\frac{Q(2-p)}{2p}-\beta <0$. Thus, by choosing $r=r_1$ sufficiently large (depending on $\beta,\gamma,Q,p$, if necessary), and $a$ acordingly so that there exists  $C_2>0$ such that 
\begin{align*}
    C_1(1-a^{-\gamma p})^{\frac{2}{p}}a^{Q(1-2/p)}- C(n,k,\beta)r_1^{\frac{Q(2-p)}{2p}-\beta}\geq C_2,
\end{align*}
we get
  \begin{align*}
      \int_{\Lambda}\sum_{\alpha:\zeta(\alpha, \lambda)\leq r_1} \|\F (f)(\la)\Phi^{\eta{(\lambda)}}_{\alpha}\|_{L^2(\p_{\la})}^2 ~|\pf (\la)|~d\lambda \geq C_2.
  \end{align*}
 We recall from \eqref{esteigler} that, there exists $C_3>0$ such that  for any $r>0$
 \begin{equation*}
   \int_{\Lambda}\sum_{\alpha:\zeta(\alpha,\la)\leq r}\|\F (f)(\la)\Phi^{\eta{(\lambda)}}_{\alpha}\|_{L^2(\p_{\la})}^2 |\pf(\la)|~d\lambda \leq C_3 r^{\frac{Q}{2}\frac{2-p}{p}}.
     \end{equation*}
 As $p<2$, we can choose $r_2>0$ small such that 
  $$C_3r_2^{\frac{Q}{2}\frac{2-p}{p}}\leq \frac12 C_2$$ 
  so that, in view of \eqref{esteigger}, we have
  \begin{align*}
       \int_{\Lambda}\sum_{\alpha:\zeta(\alpha, \lambda)\leq  r_2} \|\F (f)(\la)\Phi^{\eta{(\lambda)}}_{\alpha}\|_{L^2(\p_{\la})}^2 ~|\pf (\la)|~d\lambda \leq  \frac12 C_2.
  \end{align*}
  Consequently, 
  \begin{align}
  \label{annularspecest}
       &\int_{\Lambda}\sum_{\alpha:r_2<\zeta(\alpha, \lambda)\leq r_1} \|\F (f)(\la)\Phi^{\eta{(\lambda)}}_{\alpha}\|_{L^2(\p_{\la})}^2 ~|\pf (\la)|~d\lambda \nonumber \\
       &= \int_{\Lambda}\sum_{\alpha: \zeta(\alpha, \lambda)\leq r_1} \|\F (f)(\la)\Phi^{\eta{(\lambda)}}_{\alpha}\|_{L^2(\p_{\la})}^2 ~|\pf (\la)|~d\lambda\nonumber\\&\quad \quad- \int_{\Lambda}\sum_{\alpha:\zeta(\alpha, \lambda)\leq r_2} \|\F (f)(\la)\Phi^{\eta{(\lambda)}}_{\alpha}\|_{L^2(\p_{\la})}^2 ~|\pf (\la)|~d\lambda\geq \frac12 C_2.
  \end{align}

   Now, using Theorem \ref{pgeq2schatten}, we note that 
   \begin{align}
   \label{lowbddest1}
      &\int_{\Lambda} \| \F (f)(\lambda)H(\eta(\lambda))^{\frac{\beta}{2}}\|^{p'}_{S_{p'}(L^2(\p_{\la}))} |\pf(\lambda)|\:d\lambda \nonumber\\
      &\geq \int_{\Lambda} \sum_{\alpha\in \mathbb{N}^n}\| \F (f)(\lambda)H(\eta(\lambda))^{\frac{\beta}{2}}\Phi^{\eta(\la)}_{\alpha}\|^{p'}_{L^2(\p_{\la})} |\pf(\lambda)|\:d\lambda \nonumber\\
      &\geq \int_{\Lambda} \sum_{\alpha:r_2<\zeta(\alpha,\lambda)\leq r_1}\zeta(\alpha,\la)^{\beta p'/2}\| \F(f)(\lambda)\Phi^{\eta(\la)}_{\alpha}\|^{p'}_{L^2(\p_{\la})}|\pf(\lambda)|\:d\lambda \nonumber\\
      & \geq r_2^{\beta p'/2}\int_{\Lambda} \sum_{\alpha:r_2<\zeta(\alpha,\lambda)\leq r_1}\| \F (f)(\lambda)\Phi^{\eta(\la)}_{\alpha}\|^{p'}_{L^2(\p_{\la})} |\pf(\lambda)|\:d\lambda.
   \end{align}
   Applying H\"older's inequality, we get 
   \begin{align}\label{holder2}
      & \int_{\Lambda} \sum_{\alpha:r_2<\zeta(\alpha,\lambda)\leq r_1}\| \F (f)(\lambda)\Phi^{\eta(\la)}_{\alpha}\|^{2}_{L^2(\p_{\la})} |\pf(\lambda)|\:d\lambda\nonumber\\
      &\leq \left(\int_{\Lambda} \sum_{\alpha:r_2<\zeta(\alpha,\lambda)\leq r_1}\| \F (f)(\lambda)\Phi^{\eta(\la)}_{\alpha}\|^{p'}_{L^2(\p_{\la})} |\pf(\lambda)|\:d\lambda\right)^{\frac2{p'}} \left( \int_{\Lambda} \sum_{\alpha:r_2<\zeta(\alpha,\lambda)\leq r_1} |\pf(\lambda)|\:d\lambda\right)^{1-\frac{2}{p'}}.
   \end{align}
  Using the estimate for Pfaffian \eqref{pfest}, one can show that 
   \begin{align*}
       \int_{\Lambda} \sum_{\alpha:r_2<\zeta(\alpha,\lambda)\leq r_1} |\pf(\lambda)|\:d\lambda\lesssim r_1^{n+k}-r_2^{n+k}.
   \end{align*}
  Thus, invoking the above estimate and \eqref{annularspecest} in \eqref{holder2}, we get
   \begin{align*}
       &\int_{\Lambda} \sum_{\alpha:r_2<\zeta(\alpha,\lambda)\leq r_1}\| \F (f)(\lambda)\Phi^{\eta(\la)}_{\alpha}\|^{p'}_{L^2(\p_{\la})} |\pf(\lambda)|\:d\lambda\\
       &\gtrsim (r_1^{n+k}-r_2^{n+k})^{(1-p'/2)}\left(\int_{\Lambda} \sum_{\alpha:r_2<\zeta(\alpha,\lambda)\leq r_1}\| \F (f)(\lambda)\Phi^{\eta(\la)}_{\alpha}\|^2_{L^2(\p_{\la})}|\pf(\lambda)|\:d\lambda\right)^{\frac{p'}{2}}\\
       &\gtrsim (r_1^{n+k}-r_2^{n+k})^{(1-p'/2)}.
   \end{align*}
   This, together with \eqref{lowbddest1}, shows that
   \begin{align*}
       \int_{\Lambda} \| \F (f)(\lambda)H(\eta(\lambda))^{\frac{\beta}{2}}\|^{p'}_{S_{p'}} |\pf(\lambda)|~d\lambda \gtrsim r_2^{\beta p'/2}(r_1^{n+k}-r_2^{n+k})^{(1-p'/2)}
   \end{align*}
   This completes the proof of the theorem.
\end{proof}
\subsection{Concluding remark} The arguments presented above make essential use of explicit estimates for the eigenvalues~\eqref{eigenvalueest} and the associated Plancherel density~\eqref{pfest}. To the best of our knowledge, such estimates are possible only in the setting of M\'etivier groups among two-step nilpotent Lie groups, which explains the restriction considered here. Nevertheless, although two-step MW groups (see~\cite[Definition~2.1]{BKS}) are structurally somewhat different from M\'etivier groups, their representation theory is similar and remains substantially more explicit than that of general two-step nilpotent Lie groups (see, e.g.,~\cite{R}). These considerations suggest that an extension of the Theorem \ref{main result} to this broader class may be possible, but would require additional work to avoid reliance on explicit estimates of the eigenvalues~\eqref{eigenvalueest} and the Plancherel density~\eqref{pfest}. We intend to pursue this direction in future work.

\appendix\section{Schatten class norms}\label{schatt}
We fix a separable Hilbert space $\mathcal{H}$ with inner product $\langle\cdot,\cdot\rangle$ and norm $\|\cdot\|$. Let us first recall the definition of Schatten class operators. 
\begin{defn}
For $1\leq p\leq\infty$, the Schatten $p$-class of $\mathcal H$,
denoted by $S_p(\mathcal H)$, is defined as the family of all compact operators $T$
on $\mathcal H$ whose singular value sequence, that is, the sequence of eigenvalues of $(T^*T)^{1/2}$, $\{s_n(T)\}_{n\in\N}$ belongs to $l^p(\N)$.   
\end{defn}
The class $S_p(\mathcal H)$ equipped with the norm $$\|T\|_{S_p(\mathcal{H})}=\|\{s_n(T)\}\|_{l^p(\N)},$$ 
is a Banach space. In particular, elements of $S_1(\mathcal{H})$ and $S_2(\mathcal{H})$ are known as trace class operators and Hilbert--Schmidt operators, respectively. We also note that $S_{\infty}(\mathcal{H})$ is the space of all compact operators on $\mathcal{H}$ equipped with the operator norm. 

A sequence of functions $\{f_n\}_{n \in \mathbb{N}}$ in $\mathcal{H}$ is a frame for $\mathcal{H}$ if there exist constants $0 < C_1 \leq C_2$ such that  
$$C_1 \| f \|^2 \leq \sum_{n \in \mathbb{N}} | \langle f, f_n \rangle |^2 \leq C_2 \| f \|^2,\quad\text{for all}\: f \in \mathcal{H}.$$  
For a given frame $\{f_n\}_{n \in \mathbb{N}}$, the smallest possible constant $C_2$ is called the upper frame bound.

We present the following characterization of the Schatten classes in terms of frames. The next theorem is taken from \cite[Section 5]{V}.
\begin{thm}\label{pgeq2schatten}
Let $T$ be a compact operator on a separable Hilbert space $\mathcal{H}$ and $2 < p \leq \infty$. Then $T \in S_p(\mathcal{H})$ if and only if 
$${\{ \|T f_n\|\}_n\in \ell^p,}$$  
for every frame $\{f_n\}_{n \in \mathbb{N}}$ of $\mathcal{H}$. Moreover,  
$$\|T\|_{S_p(\mathcal{H})} = \sup \sum_{n \in \mathbb{N}} \|T f_n\|^p,$$  
where the supremum is taken over all frames $\{f_n\}_{n \in \mathbb{N}}$ of $\mathcal{H}$ with an upper frame bound smaller than or equal to $1$.
\end{thm}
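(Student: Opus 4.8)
The plan is to reduce everything to the singular value decomposition (SVD) of the compact operator and to one clean doubly-substochastic estimate. Write $T=\sum_{k}s_k\langle\,\cdot\,,u_k\rangle v_k$, where $s_1\ge s_2\ge\cdots\ge 0$ are the singular values $s_k=s_k(T)$ and $\{u_k\}$, $\{v_k\}$ are orthonormal systems, with $\{u_k\}$ an orthonormal basis of $(\ker T)^{\perp}$. I will use repeatedly that $\|Tu_k\|=s_k$, that $Tx=0$ whenever $x\perp u_k$ for all $k$, and that $\|Tf\|^2=\sum_k s_k^2\,|\langle f,u_k\rangle|^2$ for every $f\in\mathcal H$ (orthonormality of $\{v_k\}$). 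I also record a general fact about frames: if $\{f_n\}$ is a frame with upper bound $B$, then testing the frame inequality on $f=f_m$ and isolating the $n=m$ term gives $\|f_m\|^4\le B\|f_m\|^2$, hence $\|f_n\|^2\le B$ for every $n$; in particular, a frame of upper bound $\le 1$ satisfies $\|f_n\|\le 1$ and $\sum_n|\langle h,f_n\rangle|^2\le\|h\|^2$ for all $h\in\mathcal H$.

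The heart of the matter is the upper estimate. For a frame $\{f_n\}$ with upper bound $\le 1$, set $a_{nk}=|\langle f_n,u_k\rangle|^2\ge 0$. Then $A=(a_{nk})$ is \emph{doubly substochastic}: its row sums satisfy $\sum_k a_{nk}\le\|f_n\|^2\le 1$ (Bessel's inequality for the orthonormal system $\{u_k\}$), and its column sums satisfy $\sum_n a_{nk}\le\|u_k\|^2=1$ (the frame upper bound applied to the unit vector $u_k$). For any $q\ge 1$ and any nonnegative sequence $(t_k)$, a per-row application of H\"older's inequality with exponents $(q',q)$, using $\sum_k a_{nk}\le 1$, gives $\big(\sum_k a_{nk}t_k\big)^q\le\sum_k a_{nk}t_k^{q}$; summing in $n$ and using $\sum_n a_{nk}\le 1$ yields $\sum_n\big(\sum_k a_{nk}t_k\big)^q\le\sum_k t_k^{q}$. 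Taking $q=p/2\ge 1$ and $t_k=s_k^2$, this reads
\[ \sum_n\|Tf_n\|^p=\sum_n\Big(\sum_k s_k^2\,a_{nk}\Big)^{p/2}\le\sum_k s_k^{p}=\|T\|_{S_p(\mathcal H)}^{p}. \]
Hence if $T\in S_p(\mathcal H)$ then $\{\|Tf_n\|\}_n\in\ell^p$ for every frame (rescale a frame of upper bound $B$ by $B^{-1/2}$), and the supremum over frames of upper bound $\le 1$ is at most $\|T\|_{S_p(\mathcal H)}^{p}$.

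For sharpness and the converse I would extend the orthonormal system $\{u_k\}$ to an orthonormal basis $\{u_k\}\cup\{w_j\}$ of $\mathcal H$, which is a Parseval frame, in particular a frame of upper bound $1$. Since $Tw_j=0$ and $\|Tu_k\|=s_k$, this basis gives $\sum_k\|Tu_k\|^p+\sum_j\|Tw_j\|^p=\sum_k s_k^{p}=\|T\|_{S_p(\mathcal H)}^{p}$, so the supremum is \emph{attained} and equals $\|T\|_{S_p(\mathcal H)}^{p}$ (equivalently, the asserted ``moreover'' identity read with the $p$-th power). Evaluating the hypothesis $\{\|Tf_n\|\}\in\ell^p$ on this same frame forces $\sum_k s_k^{p}<\infty$, i.e. $T\in S_p(\mathcal H)$, giving the nontrivial direction of the equivalence. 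The borderline case $p=\infty$ (so $S_\infty$ is the compact operators and $\|T\|_{S_\infty}=s_1=\|T\|_{\text{op}}$) is handled directly: a normalized frame has $\|Tf_n\|\le\|T\|_{\text{op}}\|f_n\|\le\|T\|_{\text{op}}$, so $\{\|Tf_n\|\}\in\ell^\infty$, while the orthonormal basis above attains $\sup_n\|Tf_n\|=s_1$.

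The only genuinely delicate point is the upper estimate, i.e. that a doubly substochastic matrix contracts $\ell^{q}$ for $q=p/2$. I expect the two-line H\"older-plus-Fubini argument to dispatch it, and it is worth noting that the hypothesis $p>2$ is exactly what guarantees $q=p/2>1$, so that H\"older applies with a genuine conjugate exponent $q'$; the excluded value $q=1$ (i.e. $p=2$) is precisely the Hilbert--Schmidt/Parseval identity. The crux is therefore verifying the two structural constraints cleanly — the column bound from applying the frame condition to the unit vectors $u_k$, and the row bound from Bessel's inequality — after which everything reduces to bookkeeping around the SVD.
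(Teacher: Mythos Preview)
Your argument is correct. The paper does not prove this theorem at all; it is quoted verbatim from \cite[Section~5]{V} and invoked as a black box in the main text (only the inequality $\sum_{\alpha}\|T\Phi_\alpha\|^{p'}\le\|T\|_{S_{p'}}^{p'}$ for an orthonormal basis is actually used). Your proof is therefore not a comparison target but a self-contained replacement, and it works: the key step---that for a frame with upper bound $\le 1$ the matrix $a_{nk}=|\langle f_n,u_k\rangle|^2$ is doubly substochastic, whence H\"older row-wise and Fubini column-wise give $\sum_n\bigl(\sum_k a_{nk}t_k\bigr)^{p/2}\le\sum_k t_k^{p/2}$---is clean and uses exactly the hypothesis $p>2$. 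The sharpness and converse via the orthonormal basis extending $\{u_k\}$ are straightforward, and your handling of general upper bounds by rescaling and of $p=\infty$ separately is fine. You are also right that the displayed ``moreover'' identity in the statement should carry a $p$-th power on the left; as written it is a typo.
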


\section*{Acknowledgments}  
Pritam Ganguly and Jayanta  Sarkar are supported by the INSPIRE faculty fellowship (Faculty Registration No.: IFA24-MA-207 and Faculty Registration No.:
IFA22-MA-172) from the Department of Science and Technology, Government of India.


\begin{thebibliography}{99}
\bibitem{BKS} Bagchi, S.; Kumar, A.; Sen, S. \emph{Roe-Strichartz theorem on two-step nilpotent Lie groups.} Math. Nachr. 296 (2023), no. 7, 2691–2700.
\bibitem{BFG} Bahouri, H.; Fermanian-Kammerer, C.; Gallagher, I. \emph{Dispersive estimates for the Schrödinger operator on step-2 stratified Lie groups.} Anal. PDE 9 (2016), no. 3, 545–574. 
\bibitem{CDM} Chen, X., Dang, P., and Mai, W. \emph{$L^p$-type Heisenberg-Pauli-Weyl uncertainty principles for fractional Fourier transform.} Applied Mathematics and Computation, 491, 129236.(2025)
\bibitem{CCR} Ciatti, P.; Cowling, M. G.; Ricci, F. \emph{Hardy and uncertainty inequalities on stratified Lie groups.} Adv. Math. 277 (2015), 365–387.
\bibitem{CRS1}  Ciatti, P.; Ricci, F.; Sundari, M. \emph{Heisenberg-Pauli-Weyl uncertainty inequalities and polynomial volume growth.} Adv. Math. 215 (2007), no. 2, 616–625. 
\bibitem{CRS} Ciatti, P.; Ricci, F.; Sundari, M. \emph{Uncertainty inequalities on stratified nilpotent groups.} Bull. Kerala Math. Assoc. 2005, Special Issue, 53–72 (2007). 
\bibitem{CG} Corwin, L. J.; Greenleaf, F. P. \emph{Representations of nilpotent Lie groups and their applications.} Part I. Basic theory and examples. Cambridge Studies in Advanced Mathematics, 18. Cambridge University Press, Cambridge, 1990. viii+269 pp.
\bibitem{DT} Dall'Ara, G. M; Trevisan, D. \emph{Uncertainty inequalities on groups and homogeneous spaces via isoperimetric inequalities.} J. Geom. Anal. 25 (2015), no. 4, 2262–2283.
\bibitem{FSi} Folland, G. B.; Sitaram, A. \emph{The uncertainty principle: a mathematical survey.} J. Fourier Anal. Appl. 3 (1997), no. 3, 207–238.
\bibitem{FS} Folland, G. B.; Stein, E. M. \emph{Hardy spaces on homogeneous groups.} Mathematical Notes, 28. Princeton University Press, Princeton, NJ; University of Tokyo Press, Tokyo, 1982. xii+285 pp.
\bibitem{FX} Fu, X.; Xiao, J. \emph{An uncertainty principle on the Lorentz spaces}, Nonlinear Analysis, Volume 237 (2023),113367, ISSN 0362-546X
\bibitem{Heis} Heisenberg, W. \emph{Uber den anschaulichen inhalt der quantentheoretischen kinematic und mechanik.} Zeit. Physik, 43:172–198, 1927.
\bibitem{Ken} Kennard, E. H.\emph{ Zur quantenmechanik einfacher bewegungstypen.} Zeit. Physik, 44:326–352, 1927.
\bibitem{Kunz} Kunze, R. A. \emph{$L_p$ Fourier transforms on locally compact unimodular groups.} Trans. Amer. Math. Soc. 89 (1958), 519–540. 
\bibitem{MM} Mart\'in, J.; Milman, M. \emph{Isoperimetric weights and generalized uncertainty inequalities in metric measure spaces.} J. Funct. Anal. 270 (2016), no. 9, 3307–3343.
\bibitem{M}  Martini, A. \emph{Generalized uncertainty inequalities.} Math. Z. 265 (2010), no. 4, 831–848.
\bibitem{Me} Métivier, G. \emph{Hypoellipticité analytique sur des groupes nilpotents de rang 2.} Duke Math. J. 47 (1980), no. 1, 195–221.
\bibitem{MS} Müller, D.; Seeger, A. \emph{Singular spherical maximal operators on a class of two step nilpotent Lie groups.} Israel J. Math. 141 (2004), 315–340.
\bibitem{Ni} Niedorf, L. \emph{Spectral multipliers on Métivier groups.} Studia Math. 282 (2025), no. 2, 149–197. 
\bibitem{R}  Ray, S. K. \emph{Uncertainty principles on two step nilpotent Lie groups.} Proc. Indian Acad. Sci. Math. Sci. 111 (2001), no. 3, 293–318.
\bibitem{SST}  Sitaram, A.; Sundari, M.; Thangavelu, S. \emph{Uncertainty principles on certain Lie groups.} Proc. Indian Acad. Sci. Math. Sci. 105 (1995), no. 2, 135–151.
\bibitem{St} Steinerberger, S. \emph{Fourier uncertainty principles, scale space theory and the smoothest average.} Math. Res. Lett. 28 (2021), no. 6, 1851–1874.
\bibitem{Th} Thangavelu, S. \emph{Some uncertainty inequalities.} Proc. Indian Acad. Sci. Math. Sci. 100 (1990), no. 2, 137–145.
\bibitem{V} Villarroya, P. \emph{The Schatten classes of Calderón-Zygmund operators.} J. Fourier Anal. Appl. 30 (2024), no. 1, Paper No. 9, 75 pp.
\bibitem{Weyl} Weyl, H. \emph{Gruppentheorie und Quantenmechank}, volume 122. Verlag von S. Hirzel, Leibzig, 1928.

\bibitem{X} Xiao, J. \emph{$L^p$-uncertainty principle via fractional Schrödinger equation.} J. Differential Equations 313 (2022), 269–284. 

\bibitem{XH}  Xiao, J.; He, J. \emph{Uncertainty inequalities for the Heisenberg group.} Proc. Indian Acad. Sci. Math. Sci. 122 (2012), no. 4, 573–581.

\end{thebibliography}
\end{document}